\documentclass[12pt]{article}
\usepackage[utf8]{inputenc}
\usepackage[margin=1in]{geometry}
\usepackage{amsmath}
\usepackage{algorithm}
\usepackage[noend]{algpseudocode}
\usepackage{amsfonts}
\usepackage{amssymb}
\usepackage{bm}
\usepackage[numbers]{natbib}
\usepackage{textcomp}
\usepackage{pgfplots}
\usepackage{bbm}
\usepackage{makecell}
\usepackage{graphicx}
\usepackage{wrapfig}
\usepackage[]{youngtab}
\usepackage{amsthm}
\usepackage{tikz}
\usepackage{tikz-cd}
\usetikzlibrary{automata, positioning}
\usepackage{comment}
\usepackage{stmaryrd}

\usepackage{theoremref}
\usepackage{multirow}
\usepackage{mathrsfs}
\usepackage{color-edits}
\addauthor{rr}{red}
\addauthor{jw}{brown}
\addauthor{ar}{red}
\addauthor{bc}{blue}
\addauthor{dmt}{blue}

\allowdisplaybreaks

\usepackage{physics}

\usepackage[english]{babel}
\usepackage[utf8]{inputenc}
\usepackage{fancyhdr}
\pgfplotsset{width=10cm,compat=1.9}

\numberwithin{equation}{section} 



\renewcommand{\Re}{\mathbb{R}}

\renewcommand{\P}{\mathbb{P}}
\newcommand{\N}{\mathbb{N}_{\mathbf{0}}}
\newcommand{\E}{\mathbb{E}}
\newcommand{\R}{\mathbb{R}}
\renewcommand{\S}{\mathbb{S}}

\newcommand{\calF}{\mathcal{F}}

\newcommand{\Trunc}{\mathtt{Trunc}}

\newcommand{\calH}{\mathcal{H}}
\newcommand{\calG}{\mathcal{G}}

\newcommand{\bd}{line-crossing}

\newcommand{\wh}[1]{\widehat{#1}}
\newcommand{\wt}[1]{\widetilde{#1}}

\newcommand{\B}{\mathbb{B}}

\newcommand{\zhat}{\widehat{Z}}
\newcommand{\estimator}{\widehat{\xi}}
\newcommand{\est}{\widehat{\mu}}
\newcommand{\const}{\mathfrak{C}}


\newtheorem{theorem}{Theorem}
\numberwithin{theorem}{section}
\newtheorem{lemma}[theorem]{Lemma}

\newtheorem{prop}[theorem]{Proposition}
\newtheorem{corollary}[theorem]{Corollary}

\newtheorem{ass}{Assumption}
\newtheorem{remark}{Remark}

\theoremstyle{definition}
\newtheorem{definition}[theorem]{Definition}

\theoremstyle{remark}

\usepackage{thm-restate}
\usepackage{hyperref}
\hypersetup{
	colorlinks=false,
	bookmarks=true,
	breaklinks=true,
	hidelinks=true,
	pdfpagemode=empty,
}
\usepackage{nameref}
\usepackage{subcaption}
\usepackage{authblk}
\usepackage[noabbrev, capitalize, nameinlink]{cleveref}
\crefname{appendix}{}{}

\crefname{prop}{Proposition}{Propositions}
\crefname{rmk}{Remark}{Remarks}
\crefname{cor}{Corollary}{Corollaries}
\crefname{claim}{Claim}{Claims}
\crefname{lemma}{Lemma}{Lemmata}
\crefname{example}{Example}{Examples}
\crefname{corollary}{Corollary}{Corollaries}
\title{Mean Estimation in Banach Spaces \\ Under Infinite Variance and Martingale Dependence}

\author[1]{Justin Whitehouse}
\author[2,3]{Ben Chugg} 
\author[3]{\\ Diego Martinez-Taboada} 
\author[2,3]{Aaditya Ramdas}
\affil[1]{Management Science and Engineering, Stanford University}
\affil[2]{Machine Learning Department, Carnegie Mellon University}
\affil[3]{Department of Statistics and Data Science, Carnegie Mellon University\thanks{\texttt{jwhiteho@stanford.edu, diegomar@andrew.cmu.edu, \{benchugg,aramdas\}@cmu.edu}}
}
\date{\today}

\begin{document}
\maketitle
\begin{abstract}

We consider estimating the shared mean of a sequence of heavy-tailed random variables taking values in a Banach space. 
In particular, we revisit and extend a simple truncation-based mean estimator first proposed by Catoni and Giulini. 
While existing truncation-based approaches require a bound on the raw (non-central) second moment of observations, our results hold under a bound on \emph{either} the central or non-central $p$th moment for some $p \in (1,2]$. Our analysis thus  handles distributions with infinite variance. 
The main contributions of the paper follow from exploiting connections between truncation-based mean estimation and the concentration of martingales in smooth Banach spaces. We prove two types of time-uniform bounds on the distance between the estimator and unknown mean:  \textit{line-crossing inequalities}, which can be optimized for a fixed sample size $n$, and \textit{iterated logarithm inequalities}, which match the tightness of line-crossing inequalities at all points in time up to a doubly logarithmic factor in $n$. 
Our results do not depend on the dimension of the Banach space, hold under martingale dependence, and all constants in the inequalities are known and small.  

\end{abstract}
\section{Introduction}
Mean estimation is perhaps the most important primitive in the statistician's toolkit. 
When the data is light-tailed (perhaps sub-Gaussian, sub-Exponential, or sub-Gamma), the sample mean is the natural estimator of this unknown population mean. However, when the data fails to have finite moments, the naive plug-in mean estimate is known to be sub-optimal. 

The failure of the plug-in mean has led to a rich literature focused on \emph{heavy-tailed} mean estimation. In the univariate setting, statistics such as the thresholded/truncated mean estimator~\citep{tukey1963less, huber1981robust}, trimmed mean estimator~\cite{oliveira2019sub, lugosi2021robust}, median-of-means estimator~\citep{nemirovskij1983problem, jerrum1986random, alon1996space}, and the Catoni M-estimator~\citep{catoni2017dimension, wang2023catoni} have all been shown to exhibit favorable convergence guarantees. When a bound on the variance of the observations is known, many of these estimates enjoy sub-Gaussian rates of performance~\citep{lugosi2019mean}, and this rate gracefully decays when only a bound on the $p$th central moment is known for some $p > 1$~\citep{bubeck2013bandits}. 

In the more challenging setting of \emph{multivariate} heavy-tailed data, modern methods include the geometric median-of-means estimator~\citep{minsker2015geometric}, the median-of-means tournament estimator~\citep{lugosi2019sub}, and the truncated mean estimator~\citep{catoni2018dimension}. We provide a more detailed account in Section~\ref{sec:related-work}.

Of the aforementioned statistics, the truncated mean estimator is by far the simplest. This estimator, which involves truncating observations to lie within an appropriately-chosen ball centered at the origin, is extremely computationally efficient and can be updated online, very desirable for applied statistical tasks.
However, this estimator also possesses a number of undesirable properties. First, it is not translation invariant, with bounds that depend on the \textit{raw} moments of the random variables. Second, it requires a known bound on the $p$th moment of observations for some $p \geq 2$, thus requiring that the observations have finite variance. Third, bounds are only known in the setting of finite-dimensional Euclidean spaces --- convergence is not understood in the setting of infinite-dimensional Hilbert spaces or Banach spaces. 

The question we consider here is simple: are the aforementioned deficiencies fundamental to truncation-based estimators, or can they be resolved with an improved analysis? The goal of this work is to show that the latter is true, demonstrating how a truncation-based estimator can be improved to handle fewer than two central moments in general classes of Banach spaces.

\subsection{Our Contributions}
\label{sec:contributions}

In this work, we revisit and extend a simple truncation-based mean estimator due to \citet{catoni2018dimension}. Our estimator works by first using a small number of samples to produce a naive mean estimate, say through a sample mean.  Then, the remaining sequence of observations is truncated to lie in an appropriately-sized ball centered at this initial mean estimate. Centering the remaining observations in this way is what enables bounds that depend on the centered $p$-th moment instead of the raw moment. 
The truncated samples are then averaged to provide our final estimate. 

While existing works study truncation-based estimators via PAC-Bayesian analyses~\citep{catoni2018dimension, chugg2023time, langford2002pac}, we find it more fruitful to study these estimators using tools from the theory of Banach space-valued martingales. In particular, by proving a novel extension of classical results on the time-uniform concentration of bounded martingales due to \citet{pinelis1992approach, pinelis1994optimum}, we are able to greatly improve the applicability of truncation-based estimators. In particular, our estimator and analysis improves over that in \citep{catoni2018dimension} in the following ways:

\begin{enumerate}
	\item The analysis holds in arbitrary 2-smooth Banach spaces instead of just finite-dimensional Euclidean space. This not only includes Hilbert spaces but also the commonly-studied $L^\alpha$ and $\ell^\alpha$ spaces for $2 \leq  \alpha < \infty$.
	\item Our results require only a known upper bound on the conditional central $p$th moment of observations for some $p > 1$, and are therefore applicable to data lacking finite variance. Existing bounds for truncation estimators, on the other hand, require a bound on the non-central second moment. 
    \item Our bounds hold for data with a martingale dependence structure, not just for independent and identically distributed data.
    \item Our bounds are time-uniform. We prove two types of inequalities: \emph{\bd{} inequalities}, which can be optimized for a target sample size, and \textit{law of the iterated logarithm (LIL) inequalities}, which match the tightness of the boundary-crossing inequalities at all times simultaneously up to a doubly logarithmic factor in sample size. 
    \item We show that our estimator exhibits strong practical performance, and that our derived bounds are tighter than existing results in terms of constants. We run simulations which demonstrate that, for appropriate truncation diameters, the distance between our estimator and the unknown mean is tightly concentrated around zero. 
\end{enumerate}

Informally, if we assume that the central $p$th moments of all observations are conditionally bounded by $v$, and we let $\wh{\mu}_n$ denote our estimate after $n$ samples, then we show that 
\[
\|\wh{\mu}_n - \mu\| = O\left(\beta v^{1/p}(\log(1/\delta)/n)^{\frac{p-1}{p}}\right) \qquad \text{with probability } \geq 1 - \delta,
\]
where $\beta$ is a parameter governing the smoothness of the Banach space.
As far as we are aware, the only other estimator to obtain the same guarantee in a similar setting is Minsker's geometric median-of-means~\citep{minsker2015geometric}. (While he doesn't state this result explicitly, it is easily derivable from his main bound---see Appendix~\ref{sec:gmom-banach} for the details.) Minsker also works in a Banach space, but assumes that it is separable and reflexive, whereas we will assume that it is separable and smooth; the latter appears stronger.
While we obtain the same rates, we feel that our truncation-style estimator has several benefits over geometric median-of-means. Ours is computationally lightweight and easy to compute exactly. Importantly, we handle martingale dependence while Minsker does not, thus allowing our estimator to provide time-uniform guarantees. Finally, our analysis is significantly different from Minsker's---and from existing analyses of other estimators under heavy-tails---and may be of independent interest.

\subsection{Related Work}
\label{sec:related-work}

Section~\ref{sec:contributions} discussed the relationship between this paper and the two most closely related works of \citet{catoni2018dimension} and \citet{minsker2015geometric}. We now discuss how our work is related to the broader literature, none of which addresses our problem directly, but tackles simpler special cases of our problem (e.g., assuming more moments 
 or boundedness, or with observations in Hilbert spaces or Euclidean spaces).

\paragraph{Heavy-tailed mean estimation under independent observations}
Truncation-based (also called threshold-based) estimators have a rich history in the robust statistics literature, dating back to works from Tukey, Huber, and others~\citep{huber1981robust,tukey1963less}. 
These estimators have either been applied in the univariate setting or  in $\Re^d$ as in \citet{catoni2018dimension}. 
A related estimator is the so-called trimmed-mean estimator, which removes extreme observations and takes the empirical mean of the remaining points~\citep{oliveira2019sub,lugosi2021robust}. For real-valued observations with finite variance, the trimmed-mean has sub-Gaussian performance~\citep{oliveira2019sub}. 

Separately, \citet{catoni2017dimension} introduce an approach for mean estimation in $\Re^d$ based on M-estimators with a family of appropriate influence functions. This has come to be called ``Catoni's M-estimator.'' It requires at least two moments and fails to obtain sub-Gaussian rates. It faces the the additional burden of being less computationally efficient. A series of followup works have improved this estimator in various ways: for real-valued observations, \citet{chen2021generalized} extend it to handle a $p$-th moment for $p\in(1,2)$, \citet{gupta2024beyond} 
refine the constants in the preceding paper, and \citet{mathieu2022concentration} studies the optimality of general M-estimators for mean estimation. 

Another important line of work on heavy-tailed mean estimation is based on median-of-means estimators~\citep{nemirovskij1983problem,jerrum1986random,alon1996space}. These estimators generally break a dataset into several folds, compute a mean estimate on each fold, and then compute some measure of central tendency amongst these estimates. 
For real-valued observations, \citet{bubeck2013bandits} study a median-of-means estimator that holds under infinite variance. Their estimator obtains the same rate as ours and Minsker's. 
Most relevant for our work is the result on \textit{geometric median-of-means} due to \citet{minsker2015geometric}, which can be used to aggregate several independent mean estimates in general separable Banach spaces. In Hilbert spaces, when instantiated with the empirical mean under a finite variance assumption, geometric median-of-means is nearly sub-Gaussian (see discussion in Section~\ref{sec:contributions}). We compare our threshold-based estimator extensively to geometric median-of-means in the sequel and demonstrate that we obtain the same rate of convergence. 

Another important result is the multivariate tournament median-of-means estimator due to ~\citet{lugosi2019sub}. For i.i.d.\ observations in $(\Re^d, \|\cdot\|_2)$ with shared covariance matrix (operator) $\Sigma$, then \citet{lugosi2019sub} show this estimator can obtain the optimal sub-Gaussian rate of $O(\sqrt{\Tr(\Sigma)/n} + \sqrt{\|\Sigma\|_{\text{op}}\log(1/\delta)/n})$. 
However, this result requires the existence of a covariance matrix and does not extend to a bound on the $p$-th moment for $p \in (1, 2)$, which is the main focus of this work.

While the original form of the tournament median-of-means estimator was computationally inefficient (with computation hypothesized to be NP-Hard in a survey by \citet{lugosi2019mean}), a computationally efficient approximation was developed by \citet{hopkins2020mean}, with followup work improving the running time~\citep{cherapanamjeri2019fast}.  Tournament median-of-means was extended to general norms in $\Re^d$~\citep{lugosi2019near}, though the authors note that this approach is still not computationally feasible. Median-of-means style approaches have also been extended to general metric spaces~\citep{hsu2016loss,cholaquidis2024gros}. 
Of the above methods, only the geometric median-of-means estimator can handle observations that lack finite variance.


\paragraph{Sequential concentration under martingale dependence} 
Time-uniform concentration bounds, or concentration inequalities that are valid at data-dependent stopping times, have been the focus of significant recent attention~\citep{howard2020time,howard2021time, whitehouse2023time}. Such results are often obtained by identifying an underlying nonnegative supermartingale and then applying Ville's inequality~\citep{ville1939etude}, a strategy that allows for martingale dependence quite naturally. This approach is also used here. 
\citet{wang2023catoni} extend Catoni's M-estimator to handle both infinite variance and martingale dependence in $\Re$, while  \citet{chugg2023time} give a sequential version of the truncation estimator in $\Re^d$, though they require a central moment assumption and finite variance.  The analyses of both \citet{catoni2018dimension} and \citet{chugg2023time} rely on so-called ``PAC-Bayes'' arguments~\citep{catoni2007pac,chugg2023unified}.  Intriguingly, while we analyze a similar estimator, our analysis avoids such techniques and is much closer in spirit to Pinelis-style arguments~\citep{pinelis1992approach,pinelis1994optimum}.  

\citet{howard2020time, howard2021time} provide a general collection of results on time-uniform concentration for scalar processes, which in particular imply time-uniform concentration results for some heavy-tailed settings (e.g.\ symmetric observations). Likewise, \citet{whitehouse2023time} provide a similar set of results in $\R^d$. While interesting, we note that these results differ from our own in that they are \textit{self-normalized}, or control the growth of a process appropriately normalized by some variance proxy (there a mixture of adapted and predictable covariance). The results also don't apply when only a bound on the $p$th moment is known, and the latter set of results have explicit dependence on the ambient dimension $d$.


\paragraph{Concentration in Hilbert and Banach Spaces}
There are several results related to concentration in infinite-dimensional spaces. A series of works has developed self-normalized, sub-Gaussian concentration bounds in Hilbert spaces ~\citep{whitehouse2023sublinear, abbasi2013online, chowdhury2017kernelized} based on the famed method of mixtures~\citep{de2004self, de2007pseudo}. These results have not been extended to more general tail conditions.
Significant progress has been made on the concentration of bounded random variables in smooth and separable Banach spaces. 
\citet{pinelis1992approach, pinelis1994optimum} presented a martingale construction for bounded observations, thus enabling dimension-free Hoeffding and Bernstein inequalities. Dimension-dependence is replaced by the smoothness parameter of the Banach space, which for most practical applications (in Hilbert spaces, say) equals one. 
These results were strengthened slightly by \citet{howard2020time}. 
Recently, \citet{martinez2024empirical} gave an \emph{empirical}-Bernstein inequality in Banach spaces, also using a Pinelis-like construction.  Our work adds to this line by extending Pinelis' tools to the heavy-tailed setting.

\subsection{Preliminaries}
\label{sec:prelims}

We introduce some of the background and notation required to state our results. 
We are interested in estimating the shared, conditional mean $\mu$ of a sequence of random variables $(X_n)_{n \geq 1}$ living in some separable Banach space $(\B, \|\cdot\|)$. 
Recall that a Banach space is a complete normed vector space; examples include Hilbert spaces, $\ell^\alpha$ sequence spaces, and $L^\alpha$ spaces of functions. 
We make the following central assumption.

\begin{ass}
\label{ass:mean}
We assume $(X_n)_{n \geq 1}$ are a sequence of $\B$-valued random variables adapted to a filtration $\calF \equiv  (\calF_n)_{n\geq 0}$ such that 
\begin{enumerate}
    \item[(1)] $\E(X_n \mid \calF_{n - 1}) = \mu$, for all $n \geq 1$ and some unknown $\mu \in \B$, and 
    \item[(2)] $\sup_{n \geq 1}\E\left(\|X_n - \mu\|^p \mid \calF_{n - 1}\right) \leq v < \infty$ for some known constants $p \in (1, 2]$ and $v > 0$.
\end{enumerate}
\end{ass}

The martingale dependence in condition (1) above is weaker than the traditional i.i.d.\ assumption, requiring only a constant \emph{conditional} mean. 
This is useful in applications such as multi-armed bandits, where we cannot assume that the next observation is independent of the past. 
Meanwhile, condition (2) allows for infinite variance, a weaker moment assumption than past works studying concentration of measure in Banach spaces (e.g., \citep{minsker2015geometric,pinelis1992approach,pinelis1994optimum}). 
In Appendix~\ref{sec:additional-results} we replace condition (2) with a bound on the raw moment (that is, $\E(\|X_n\|^p | \calF_{n-1})$) for easier comparison with previous work. We note that other works studying truncation-based estimators have exclusively considered the $p \geq 2$ setting where observations admit covariance matrices \citep{chugg2023time, catoni2018dimension, lugosi2019mean}. We focus on $p \in (1, 2]$ in this work, but it is likely our techniques could be naturally extended to the $p \geq 2$ setting. We leave this as future work. 

In order obtain concentration bounds, we must assume the Banach space is reasonably well-behaved. This involves assuming that is it both separable and \emph{smooth}. 
A space is separable if it contains a countable, dense subset, and a  real-valued function $f:\B\to\Re$ is $(2,\beta)$-smooth if, for all $x,y\in\B$,  $f(0) = 0$, $|f(x + y) - f(x)| \leq \|y\|$, and 
\begin{equation}
    \label{eq:2beta-smooth}
    f^2(x+y) + f^2(x - y) \leq 2f^2(x) + 2\beta^2 \|y\|^2.
\end{equation}
We call a Banach space as $\beta$-smooth if its norm is $(2,\beta)$-smooth.\footnote{In the parlance of convex analysis, $(2, \beta)$-smoothness is equivalent to $f^2$ being $2\beta^2$-smooth, i.e.
\[
f^2(tx + (1 - t)y)  \geq tf^2(x) + (1 - t)f^2(y) - \beta^2t(1 - t)\|x - y\|^2.
\]
}
Observe that $\beta\geq 1$ when $f$ is the norm, which can be seen by taking $x=0$ in~\eqref{eq:2beta-smooth}. 

\begin{ass}
\label{ass:smooth}
We assume that the Banach space $(\B,\|\cdot\|)$ is both separable and $\beta$-smooth. 
\end{ass}

Assumption~\ref{ass:smooth} is common when studying concentration of measure in Banach spaces~\citep{pinelis1992approach,pinelis1994optimum,howard2020time,martinez2024empirical}. 
The following result assures us that the sample mean does in fact concentrate in \emph{smooth} Banach spaces, which will be required in our analysis of the naive mean estimate we use as a centering device (see $\zhat_k$ in Section~\ref{sec:results}). The proof in Appendix~\ref{sec:proofs} relies on an argument based on decoupled tangent sequences, which may be of independent interest. If the underlying space $\B$ is a Hilbert spaces and $p = 2$, the multiplicative factor of 2 in the bound below can be dropped.

\begin{lemma}[Naive Mean Concentration]
\label{lem:mean_concentration}
Let $(X_n)_{n \geq 1}$ be a sequence random variables satisfying Assumption~\ref{ass:mean} taking values in a Banach space satisfying Assumption~\ref{ass:smooth}. Then, for any $\delta \in (0, 1)$ and $n \geq 1$, we have
\[
\P\left(\|\wh{\mu}_n - \mu\| \leq \frac{2 \beta v^{1/p}}{\delta^{1/p}n^{(p - 1)/p}}\right) \geq 1 - \delta,
\]
where $\wh{\mu}_n := n^{-1}\sum_{m =1}^n X_m$ denotes the usual sample mean.
\end{lemma}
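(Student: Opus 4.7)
The plan is to bound the $p$th moment $\E\|\wh{\mu}_n - \mu\|^p$ and apply Markov's inequality. Writing $S_n := \sum_{m=1}^n D_m$ with $D_m := X_m - \mu$, so that $\wh{\mu}_n - \mu = S_n/n$, the goal reduces to showing
\[
\E\|S_n\|^p \leq (2\beta)^p \sum_{m=1}^n \E\|D_m\|^p \leq (2\beta)^p n v,
\]
from which Markov's inequality on the random variable $\|\wh{\mu}_n - \mu\|^p$ with threshold $\delta$ delivers exactly the stated tail bound.

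The key device is a \emph{decoupled tangent sequence}, constructed on an enlarged probability space via the regular-conditional-distribution trick: set $D'_m := F_m^{-1}(\xi_m)$, where $F_m$ is a regular conditional distribution of $D_m$ given $\calF_{m-1}$ and $(\xi_m)_{m \geq 1}$ are i.i.d.\ $\mathrm{Uniform}[0,1]$ variables independent of $\calF_\infty$. Then $\calL(D'_m \mid \calF_{m-1}) = \calL(D_m \mid \calF_{m-1})$ (so in particular $\E[D'_m \mid \calF_{m-1}] = 0$), and $(D'_1, \ldots, D'_n)$ are conditionally independent given $\calF_n$. Because $\xi_m$ is independent of $\calF_\infty$, we in fact have $\E[D'_m \mid \calF_n] = \E[D'_m \mid \calF_{m-1}] = 0$. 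Since $S_n$ is $\calF_n$-measurable, this gives $S_n = \E[S_n - \sum_m D'_m \mid \calF_n]$, and Jensen's inequality for the convex function $\|\cdot\|^p$ yields
\[
\E\|S_n\|^p \leq \E\Big\|\sum_{m=1}^n (D_m - D'_m)\Big\|^p.
\]

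Next I symmetrize. Set $Y_m := D_m - D'_m$ and let $\calG_m := \calF_m \vee \sigma(D'_1, \ldots, D'_m)$. Conditional on $\calG_{m-1}$, the pair $(D_m, D'_m)$ is i.i.d., so $Y_m \stackrel{d}{=} -Y_m$ given $\calG_{m-1}$. An induction on $m$ then gives $(Y_1, \ldots, Y_n) \stackrel{d}{=} (\epsilon_1 Y_1, \ldots, \epsilon_n Y_n)$ for i.i.d.\ Rademachers $(\epsilon_m)$ independent of everything. Using the triangle inequality and convexity,
\[
\E\Big\|\sum_m Y_m\Big\|^p = \E\Big\|\sum_m \epsilon_m(D_m - D'_m)\Big\|^p \leq 2^{p-1}\Big(\E\Big\|\sum_m \epsilon_m D_m\Big\|^p + \E\Big\|\sum_m \epsilon_m D'_m\Big\|^p\Big).
\]
For any fixed realization of $(D_m)$, iterating the $(2,\beta)$-smoothness inequality one Rademacher sign at a time gives $\E_\epsilon \|\sum_m \epsilon_m D_m\|^2 \leq \beta^2 \sum_m \|D_m\|^2$. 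Since $p/2 \leq 1$, Jensen's inequality (concavity of $t \mapsto t^{p/2}$) followed by subadditivity of $t \mapsto t^{p/2}$ on $[0,\infty)$ yields
\[
\E_\epsilon \Big\|\sum_m \epsilon_m D_m\Big\|^p \leq \beta^p \Big(\sum_m \|D_m\|^2\Big)^{p/2} \leq \beta^p \sum_m \|D_m\|^p,
\]
and the same bound holds for $D'_m$. Taking total expectations and using $\E\|D'_m\|^p = \E\|D_m\|^p \leq v$ assembles the bound $\E\|S_n\|^p \leq (2\beta)^p n v$.

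For a Hilbert space with $p = 2$, decoupling and symmetrization are unnecessary: orthogonality gives $\E\|S_n\|^2 = \sum_m \E\|D_m\|^2 \leq nv$, saving the factor of $2$. I expect the main technical subtlety to lie in the construction above --- specifically, verifying that the $\xi_m$-randomization being independent of $\calF_\infty$ yields $\E[D'_m \mid \calF_n] = 0$ and that conditional symmetry of $Y_m$ given $\calG_{m-1}$ propagates to the joint Rademacher-invariance of $(Y_1, \ldots, Y_n)$ --- rather than in the smoothness-based moment inequality, which is by now standard (cf.\ Pinelis).
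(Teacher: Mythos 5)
Your overall architecture (decoupling, symmetrization, iterating $(2,\beta)$-smoothness on a Rademacher sum, then Markov) is the same as the paper's, and several steps are fine: the construction $D_m' = F_m^{-1}(\xi_m)$, the identity $\E[D_m'\mid\calF_n]=0$, the Jensen step $\E\|S_n\|^p \le \E\|\sum_m (D_m-D_m')\|^p$, and the smoothness-plus-subadditivity bound for a Rademacher sum with \emph{fixed} coefficients. However, there is a genuine gap at the symmetrization step: under martingale dependence, conditional symmetry of $Y_m = D_m - D_m'$ given $\calG_{m-1}$ does \emph{not} imply the joint identity $(Y_1,\dots,Y_n) \stackrel{d}{=} (\epsilon_1 Y_1,\dots,\epsilon_n Y_n)$ with $\epsilon$ independent of everything. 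Conditional symmetry only licenses flipping the \emph{last} increment, conditionally on the past; flipping an earlier $Y_m$ changes its joint law with the future, because the conditional law of $Y_{m+1},\dots,Y_n$ depends on $\calF_m$, which is correlated with $Y_m$. Concrete counterexample (with $\mu=0$, $\B=\R$): let $X_1 = \pm 1$ be Rademacher, let $X_2\mid X_1{=}1 \sim \mathrm{Unif}\{\pm 1\}$ and $X_2 \mid X_1{=}{-1} \sim \mathrm{Unif}\{\pm 2\}$, and let $D_m'$ be the decoupled copies. Then $\P(Y_1=2, Y_2=2) = 1/16$ while $\P(Y_1=-2, Y_2=2)=0$, so $(Y_1,Y_2)\neq_d(-Y_1,Y_2)$, and one checks similarly that $(\epsilon_1Y_1,\epsilon_2Y_2)$ and even $|\epsilon_1 Y_1 + \epsilon_2 Y_2|$ have different laws than $(Y_1,Y_2)$ and $|Y_1+Y_2|$ (e.g.\ the mass at $|{\cdot}|=6$ is $1/16$ versus $1/32$). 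So the equality $\E\|\sum_m Y_m\|^p = \E\|\sum_m \epsilon_m Y_m\|^p$ you rely on is unjustified, and with it the reduction to a Rademacher sum with frozen coefficients collapses; your argument is only valid in the i.i.d.\ case, whereas Assumption 1 requires the martingale case.

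The paper's proof is structured precisely to avoid this: full symmetrization of the sum is used \emph{only} in the independent case, while in the dependent case the bound $\E\|S_n\|^p \le 2^p\beta^p\sum_m \E\|X_m\|^p$ is proved by induction on $n$, decoupling and symmetrizing only the newest increment conditionally ($\E[\|S_{n-1}+X_n-X_n'\|^p\mid\calF_{n-1}] = \E[\|S_{n-1}+\epsilon_n(X_n-X_n')\|^p\mid\calF_{n-1}]$, which is legitimate since $S_{n-1}$ is $\calF_{n-1}$-measurable), and then controlling $\E(\|S_{n-1}+\epsilon_n(X_n-X_n')\|^2\mid\calH_n) \le \|S_{n-1}\|^2 + \beta^2\|X_n-X_n'\|^2$ via a Pinelis-style Taylor expansion with Gateaux derivatives before applying $t\mapsto t^{p/2}$, subadditivity, and the inductive hypothesis. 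To repair your proof you would need to replace the global symmetrization by such a one-increment-at-a-time induction (or invoke a decoupling/comparison inequality for conditionally symmetric sequences in smooth spaces, which costs constants and additional machinery); as written, the step is incorrect.
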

We emphasize that $\beta$ is not akin to the dimension of the space. For instance, infinite-dimensional Hilbert spaces have $\beta=1$ and $L^\alpha$ and $\ell^\alpha$ spaces have $\beta = \sqrt{\alpha-1}$ for $\alpha \geq 2$. Thus, bounds which depend on $\beta$ are still dimension-free.

\paragraph{Notation and background} For notational simplicity, we define the conditional expectation operator $\E_{n - 1}[\cdot]$ to be $\E_{n - 1}[X] := \E(X \mid \calF_{n - 1})$ for any $n \geq 1$. 
If $S \equiv (S_n)_{n \geq 0}$ is some stochastic process, we denote the $n$-th increment as $\Delta S_n := S_n - S_{n - 1}$ for any $n \geq 1$.
For any process or sequence $a \equiv (a_n)_{n \geq 1}$, denote by $a^n$ the first $n$ values: $a^n = (a_1,\dots,a_n)$. 
We say the process $S$ is \emph{predictable} with respect to filtration $\calF$, if $S_n$ is $\calF_{n-1}$-measurable for all $n\geq 1$. 
Our analysis will make use of both the Fréchet and Gateaux derivatives of functions in a Banach space. We do not define these notions here; see \citet{ledoux2013probability}. 

\paragraph{Outline}
Section~\ref{sec:results} provides statements of the main results. Our main result, Theorem~\ref{thm:template}, is a general template for obtaining bounds (time-uniform boundary-crossing inequalities in particular) on truncation-style estimators. Corollary~\ref{cor:opt-lambda} then instantiates the template with particular parameters to obtain tightness for a fixed sample size. Section~\ref{sec:analysis} is dedicated to the proof of Theorem~\ref{thm:template}. 
Section~\ref{sec:lil} then uses a technique known as ``stitching'' to extend our \bd{} inequalities to bounds which shrink to zero over time at an iterated logarithm rate. Finally, Section~\ref{sec:experiments} provides several numerical experiments demonstrating the efficacy of our proposed estimator in practice.

\section{Main Result}
\label{sec:results}





Define the mapping 
\begin{equation}
 \Trunc : \B \rightarrow [0,1] \text{~ by ~} x\mapsto  \frac{1 \land \|x\|}{\|x\|}.  
\end{equation}
Clearly, $\Trunc(x)x$ is just the projection of $x$ onto the unit ball in $\B$. Likewise, $\Trunc(\lambda x) x$ is the projection of $x$ onto the ball of radius $\lambda^{-1}$ in $\B$. 
We note that the truncated observations $\Trunc(X_n)X_n$ are thus themselves bounded random variables, which are adapted to the underlying filtration $\calF$. 

As we discussed in Section~\ref{sec:contributions}, previous analyses of truncation-style estimators have relied on a bound on the raw second moment. To handle a central moment assumption, we will center our estimator around a naive mean estimate which has worse guarantees but whose effects wash out over time.

To formalize the above, our estimate of $\mu$ at time $n$ will be 
\begin{equation}
    \label{eq:estimator}
        \est_n(k) \equiv \est_n(k,\lambda, \zhat_k) := \frac{1}{n - k}\sum_{k< m\leq n}\big\{ \Trunc(\lambda (X_m - \zhat_k))(X_m - \zhat_k) + \zhat_k\big\}, 
\end{equation}
where $\zhat_k$ is a naive mean estimate formed using the first $k$ samples and 
$\lambda>0$ is some fixed hyperparameter. 
Defining $\zhat_0=0$ when $k=0$, we observe that $\est_n(0)$ is the usual truncation estimator, analyzed by \citet{catoni2018dimension} in the fixed-time setting and \citet{chugg2023time} in the sequential setting.  To state our result, we define the constant $K_p$ as
\begin{equation}
    \label{eq:Kp}
    K_p := \frac{1}{p/q + 1}\left(\frac{p/q}{p/q + 1}\right)^{p/q} \text{~ where ~} \frac{1}{p} + \frac{1}{q} = 1,
\end{equation}
which depends on the Holder conjugate $q$ of $p$. Note that $K_p< 1$ for all $p>1$. In fact, $\lim_{p\to 1}K_p = 1$, $\lim_{p\to\infty}K_p=0$, and $K_p$ is decreasing in $p$. We also define the constant 
\begin{equation}
\label{eq:absolute_constant}
    \const_p(\B) = \begin{cases}
         2^{p-1} (\frac{e^2 -3}{4}), &\text{if }(\B,\|\cdot\|)\text{ is a Hilbert space}, \\  
         2^{p+1} \left(\frac{e^{2\beta^{-1}} -2\beta^{-1} - 1}{(2\beta^{-1})^2}\right), &\text{otherwise},    \end{cases}
\end{equation}
which depends on the geometry and smoothness $\beta$ of the Banach space $(\B,\|\cdot\|)$. We note that the function $\frac{e^\rho - \rho - 1}{\rho^2}$ is strictly increasing in $\rho$, and thus that we have, for any $\beta \geq 1$:
\[
0.5 < \frac{e^{2\beta^{-1}} - 2\beta^{-1} - 1}{(2\beta^{-1})^2} \leq \left(\frac{e^2 - 3}{4}\right) < 1.1.
\]
In a Hilbert space, we can exploit inner product structure to save a small multiplicative factor in defining $\const_p(\B)$. 

Our main result is the following template for bounding the deviations of $\est_n$ assuming some sort of concentration of $\zhat_k$ around $\mu$. 
\begin{theorem}[Main result]
\label{thm:template} 
 Let $(X_n)_{n\geq 1}$ be a sequence of random variables satisfying Assumption~\ref{ass:mean} which lie in some Banach space $(\B,\|\cdot\|)$ satisfying Assumption~\ref{ass:smooth}. Suppose we use the first $k$ samples to construct $\zhat_k$, and suppose $\zhat_k$ satisfies, for any $\delta\in(0,1]$, 
\begin{equation}
\label{eq:zhat_uniform_bound}
    \P(\|\mu - \zhat_k\| \geq r(\delta,k))\leq \delta, 
\end{equation}
where $r:(0,1]\times \mathbb{N} \to\Re_{\geq 0}$ is some function. 
Fix any $\delta \in (0, 1]$. Decompose $\delta$ as $\delta = \delta_1 + \delta_2$ where $\delta_1,\delta_2>0$. Then, for any $\lambda>0$, with probability $1-\delta$, simultaneously for all $n\geq k$, we have: 
\begin{equation}
\label{eq:template-guarantee}
\left\| \est_{n}(k,\lambda,\zhat_k) - \mu\right\| \leq \lambda^{p-1}( \beta \const_p(\B) + K_p2^{p-1}) (v + r(\delta_2,k)^p) + \frac{\beta \log(2/\delta_1)}{\lambda (n-k)}. 
\end{equation}
\end{theorem}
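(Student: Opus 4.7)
The plan is to decompose $(n-k)(\est_n(k,\lambda,\zhat_k) - \mu)$ into a martingale sum and a deterministic bias and then control each on the event that $\zhat_k$ is accurate. Writing $W_m := X_m - \zhat_k$ and $Y_m := \Trunc(\lambda W_m) W_m$ (so that $\|Y_m\| \leq 1/\lambda$), and using that $\zhat_k$ is $\calF_k$-measurable with $\E_{m-1}[X_m - \mu] = 0$ for $m > k$,
\[
(n-k)(\est_n - \mu) \;=\; \underbrace{\sum_{m=k+1}^n (Y_m - \E_{m-1} Y_m)}_{M_n} \;+\; \underbrace{\sum_{m=k+1}^n \E_{m-1}[Y_m - W_m]}_{B_n}.
\]
First I would condition on the good event $\{\|\mu - \zhat_k\| \leq r(\delta_2, k)\}$ (probability $\geq 1-\delta_2$ by hypothesis), on which $(a+b)^p \leq 2^{p-1}(a^p + b^p)$ yields $\E_{m-1}\|W_m\|^p \leq 2^{p-1}(v + r(\delta_2,k)^p)$ --- the key quantitative handle for everything downstream.

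For the bias, the identity $W_m - Y_m = W_m\bigl(1 - 1/(\lambda\|W_m\|)\bigr)\one\{\lambda\|W_m\| > 1\}$ gives $\|W_m - Y_m\| = (\|W_m\| - 1/\lambda)_+$. A one-variable optimization shows that $(s - 1/\lambda)_+ \leq K_p\,\lambda^{p-1}\, s^p$ for every $s \geq 0$, with the maximizer of $s \mapsto (s - 1/\lambda) s^{-p}$ located at $s = q/\lambda$ (where $q = p/(p-1)$ is the Hölder conjugate of $p$); a direct calculation identifies the tight constant with $K_p$ of \eqref{eq:Kp}. Taking conditional expectations and summing then yields $\|B_n\| \leq (n-k)\,K_p\, 2^{p-1}\lambda^{p-1}(v + r(\delta_2,k)^p)$ on the good event.

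The main obstacle is time-uniform control of $\|M_n\|$. Since $\|Y_m - \E_{m-1} Y_m\| \leq 2/\lambda =: c$, I would construct a Pinelis-style exponential supermartingale adapted to the $(2,\beta)$-smoothness of $\|\cdot\|$: with $\psi(u) := (e^u - u - 1)/u^2$ and $V_n := \sum_{m=k+1}^n \E_{m-1}\|Y_m - \E_{m-1} Y_m\|^2$, the process
\[
L_n(\eta) \;=\; \exp\!\Bigl(\eta \|M_n\| - \beta^2 \eta^2 \psi(\eta c)\, V_n\Bigr)
\]
is a nonnegative supermartingale for every $\eta > 0$, obtained by extending the scalar Bennett MGF bound $\E[e^{\eta D}] \leq \exp(\eta^2 \psi(\eta c) \E[D^2])$ to Banach-valued martingale differences in the spirit of \citet{pinelis1992approach,pinelis1994optimum}. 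In a Hilbert space the inner-product identity $\E\|D\|^2 = \E\|Y\|^2 - \|\E Y\|^2 \leq \E\|Y\|^2$ saves a factor of four over the generic Banach bound $\E\|D\|^2 \leq 4\E\|Y\|^2$, which is precisely what distinguishes the two cases in $\const_p(\B)$. Ville's inequality applied to $L_n(\eta)$ then gives, with probability $\geq 1 - \delta_1$ simultaneously for all $n \geq k$,
\[
\|M_n\| \;\leq\; \beta^2 \eta\, \psi(\eta c)\, V_n \;+\; \frac{\log(2/\delta_1)}{\eta}.
\]

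Finally, I would interpolate to control $V_n$ by the $p$-th moment: on the good event, $\E_{m-1}\|Y_m - \E_{m-1}Y_m\|^2 \leq 4 \E_{m-1}\|Y_m\|^2 \leq 4\lambda^{p-2}\E_{m-1}\|W_m\|^p \leq 2^{p+1}\lambda^{p-2}(v + r(\delta_2,k)^p)$, so $V_n \leq (n-k)\,2^{p+1}\lambda^{p-2}(v + r(\delta_2,k)^p)$. Choosing $\eta = \lambda/\beta$ makes $\eta c = 2/\beta$, so that the variance term collapses to $(n-k)\,\beta\,\bigl[2^{p+1}\psi(2/\beta)\bigr]\lambda^{p-1}(v+r(\delta_2,k)^p) = (n-k)\,\beta\,\const_p(\B)\,\lambda^{p-1}(v + r(\delta_2,k)^p)$ by the definition of $\const_p(\B)$ in \eqref{eq:absolute_constant}, while the residual $\log(2/\delta_1)/\eta$ term becomes $\beta \log(2/\delta_1)/\lambda$. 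Dividing by $n-k$, combining with the bias bound via the triangle inequality, and union-bounding the Ville event against the failure of the good event (total failure mass $\delta_1 + \delta_2 = \delta$) delivers \eqref{eq:template-guarantee}.
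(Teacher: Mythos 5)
Your proposal is correct and follows essentially the same route as the paper: the same decomposition into a bounded truncated martingale plus a truncation bias (the paper's $S_n$ and $\wt{\mu}_m$-terms), the same Pinelis-style Bennett supermartingale for $2$-smooth Banach spaces combined with Ville's inequality (your $\eta = \lambda/\beta$ is the paper's $\rho = \beta^{-1}$, and your Hilbert-vs-Banach factor of four is exactly the paper's two cases of $\const_p(\B)$), and the same union bound over the Ville event and the naive-estimate event. The only cosmetic differences are that you obtain the bias constant $K_p$ by a direct pointwise optimization where the paper routes through H\"older's inequality plus its Lemma~\ref{lem:kth} (identical constants result), and that, strictly speaking, $L_n(\eta)$ is only upper bounded by a nonnegative supermartingale via the $\cosh$ trick---a nuance your $\log(2/\delta_1)$ already accounts for.
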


The guarantee provided by Theorem~\ref{thm:template} is a \emph{\bd{} inequality} in the spirit of \citet{howard2020time}. That is, if we multiply both sides by $n - k$, it provides a time-uniform guarantee on the probability that the left hand side deviation between $\est_n$ and $\mu$ ever crosses the line parameterized by the right hand side of \eqref{eq:template-guarantee}.  If we optimize the value of $\lambda$ for a particular sample size $n^*$, the bound will remain valid for all sample sizes, but will be tightest at and around $n=n^*$. To obtain bounds that are tight for all $n$ simultaneously, one must pay an additional iterated logarithmic price in $n$. To accomplish this, Section~\ref{sec:lil} will deploy a carefully designed union bound over geometric epochs---a technique known as ``stitching''~\citep{howard2021time}. However, for practical applications where the sample size is known in advance, we recommend Theorem~\ref{thm:template} and its corollaries. 

The assumption outlined in Equation~\eqref{eq:zhat_uniform_bound} is that one can perform some ``naive'' estimation of the mean, say through a sample mean. The function $r(\delta, k)$ is a rate function that describes how quickly the naive mean estimate concentrates given $k$ samples. The rate of naive mean concentration may be slow, but as the total sample size $n$ grows, the effect of naive mean estimation will be negligible. 

Next we provide a guideline on choosing $\lambda$ in Theorem~\ref{thm:template} to optimize the tightness of our bound for a fixed sample size. The proof follows from manipulating the right hand side of~\eqref{eq:template-guarantee}. 

\begin{corollary}
\label{cor:opt-lambda}
In Theorem~\ref{thm:template}, consider taking
    \begin{equation}
    \label{eq:opt-lambda}
    \lambda = \left(\frac{\beta \log(2/\delta_1)}{( \beta \const_p(\B) + K_p 2^{p - 1})(n-k)(v + r^p( \delta_2,k))}\right)^{1/p}.
\end{equation}
Then, with probability at least $1 - \delta_1 - \delta_2$, we have
\begin{equation}
\label{eq:cor_bound}
\|\wh{\mu}_n(k) - \mu\| \leq 2\Big((\beta\const_p(\B) + K_p 2^{p - 1})(v + r^p( \delta_2,k))\Big)^{1/p}\left(\frac{\beta\log(2/\delta_1)}{n - k}\right)^{(p - 1)/p}.    
\end{equation}
In particular, as long as $k=o(n)$, $r(\delta,k) = o(1)$ and $\delta_1, \delta_2 = \Theta(\delta)$,
we have
\begin{equation}
\label{eq:asymptotic-rate}
\|\wh{\mu}_n(k) - \mu\| = O\left(\beta v^{1/p}\left(\frac{\log(1/\delta)}{n}\right)^{(p - 1)/p}\right).   
\end{equation}
\end{corollary}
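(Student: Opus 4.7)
The corollary is a direct consequence of Theorem~\ref{thm:template} followed by an optimization in $\lambda$. My plan is to substitute the proposed choice of $\lambda$ into the right-hand side of~\eqref{eq:template-guarantee}, show that under this choice both summands coincide, and then read off the asymptotic scaling.

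First I would set, for notational brevity,
\[
A := ( \beta \const_p(\B) + K_p 2^{p-1})\bigl(v + r(\delta_2,k)^p\bigr), \qquad B := \frac{\beta \log(2/\delta_1)}{n-k},
\]
so that the bound of Theorem~\ref{thm:template} reads $\|\est_n(k)-\mu\| \leq A\lambda^{p-1} + B/\lambda$. The proposed choice of $\lambda$ in~\eqref{eq:opt-lambda} is precisely $\lambda = (B/A)^{1/p}$. Plugging this in, one computes
\[
A\lambda^{p-1} = A^{1/p} B^{(p-1)/p} \qquad \text{and} \qquad \frac{B}{\lambda} = A^{1/p} B^{(p-1)/p},
\]
so that the two terms are equal and the right-hand side becomes $2 A^{1/p} B^{(p-1)/p}$. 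Substituting back the definitions of $A$ and $B$ yields exactly~\eqref{eq:cor_bound}. I would note in passing that this $\lambda$ is not quite the argmin of $A\lambda^{p-1}+B/\lambda$ (the true minimizer involves an extra factor of $(p-1)^{-1/p}$), but balancing the two summands gives a cleaner constant and is off from the true minimum by at most a factor of $2$.

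For the asymptotic claim~\eqref{eq:asymptotic-rate}, I would then take the three hypotheses in turn: $\delta_1,\delta_2 = \Theta(\delta)$ gives $\log(2/\delta_1) = \Theta(\log(1/\delta))$; $r(\delta_2,k)=o(1)$ ensures $v + r(\delta_2,k)^p = v + o(1) = \Theta(v)$; and $k=o(n)$ gives $n-k = \Theta(n)$. Finally, recalling that the stated bounds on $\const_p(\B)$ give $\beta \const_p(\B) = \Theta(\beta)$ (since $\const_p(\B)\in(0.5\cdot 2^{p+1}, 1.1 \cdot 2^{p+1}]$ in the Banach case and is an absolute constant times $2^{p-1}$ in the Hilbert case, independent of $\beta$), the factor $A^{1/p}$ scales as $\beta^{1/p} v^{1/p}$ while $B^{(p-1)/p}$ scales as $\beta^{(p-1)/p}(\log(1/\delta)/n)^{(p-1)/p}$. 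Multiplying, the $\beta$-exponents add to $1$, giving the advertised bound of $O(\beta v^{1/p}(\log(1/\delta)/n)^{(p-1)/p})$.

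There is no real obstacle here: everything reduces to a one-variable optimization over a sum of two power-law terms, and the rest is bookkeeping. The only place requiring a small amount of care is confirming that the geometry-dependent constant $\const_p(\B)$ contributes only an $O(1)$ factor (not a factor growing with $\beta$), so that the final $\beta$-dependence is linear rather than super-linear; this is immediate from the explicit formulae in~\eqref{eq:absolute_constant} and the monotonicity remark that follows it.
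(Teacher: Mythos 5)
Your proposal is correct and is essentially the paper's own argument: the paper proves the corollary by exactly this manipulation of the right-hand side of \eqref{eq:template-guarantee}, choosing $\lambda$ to balance the two terms $A\lambda^{p-1}$ and $B/\lambda$ (the same computation appears again in the proof of Corollary~\ref{cor:empirical_mean}), and the asymptotic claim follows by the same bookkeeping you describe, including the observation that $\const_p(\B)$ is bounded by absolute constants so the $\beta$-dependence stays linear.
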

This is the desired rate per the discussion in Section~\ref{sec:contributions}, matching the rate of other estimators which hold under infinite variance. In particular, it matches the rates of \citet{bubeck2013bandits} in scalar settings and \citet{minsker2015geometric} in Banach spaces.

We can make the asymptotic rate requirements outlined in Corollary~\ref{cor:opt-lambda} more concrete by looking at a couple of examples for the naive mean estimate. In particular, we can instantiate Theorem~\ref{thm:template} when we take $\zhat_k$ to be either the sample mean or \citeauthor{minsker2015geometric}'s geometric median-of-means. The latter provides a better dependence on $\delta_2$ but at an additional computational cost. As we'll see in Section~\ref{sec:experiments}, this benefit is apparent for small sample sizes, but washes out as $n$ grows. The details of the proof can be found in Appendix~\ref{sec:proofs}. 

\begin{corollary}
    \label{cor:empirical_mean}
    Let $(X_n)_{n\geq 1}$ be a sequence of random variables satisfying Assumption~\ref{ass:mean} which lie in some Banach space $(\B,\|\cdot\|)$ satisfying Assumption~\ref{ass:smooth}. 
    For some $k<n$, let $\zhat_k$ be the empirical mean of the first $k$ observations. Given $\delta>0$, decompose it as $\delta = \delta_1+\delta_2$ for any $\delta_1,\delta_2>0$. Then, with probability $1-\delta$, 
    \begin{equation}
    \label{eq:naive-mean-bound}
        \|\est_n(k) - \mu\| \leq 2\beta v^{1/p} C_p^{1/p}\left(\frac{\log(2/\delta_1)}{(n-k)^{1/p}}\right)^{(p-1)/p}\left( 1 + O\left(\frac{\beta^p}{\delta_2 k^{p-1}}\right)\right)^{1/p},
    \end{equation}
    where $C_p = \const_p(\B) + \beta^{-1}K_p2^{p-1}$. If, on the other hand, $\zhat_k$ is the geometric median-of-means estimator with appropriate tuning parameters, then 
    with probability $1-\delta$, 
    \begin{equation}
    \label{eq:geo-mean-bound}
        \|\est_n(k) - \mu\| \leq 2\beta v^{1/p} C_p^{1/p}\left(\frac{\log(2/\delta_1)}{(n-k)^{1/p}}\right)^{(p - 1)/p}\left( 1 + O\left(\frac{\beta^p\log(1/\delta_2)^{p-1}}{k^{p - 1}}\right)\right)^{1/p}.
    \end{equation}
\end{corollary}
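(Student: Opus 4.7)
The proof plan is to apply Corollary~\ref{cor:opt-lambda} directly, with the only substantive work being the identification of an appropriate rate function $r(\delta, k)$ for each of the two choices of naive estimator $\zhat_k$ and the subsequent algebraic simplification of the factor $(v + r^p(\delta_2, k))^{1/p}$.

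For the empirical mean case, Lemma~\ref{lem:mean_concentration} already supplies the required tail bound of the form~\eqref{eq:zhat_uniform_bound}. Reading off its statement, we may take
\begin{equation*}
r(\delta, k) \;=\; \frac{2\beta v^{1/p}}{\delta^{1/p} k^{(p-1)/p}},
\qquad\text{so}\qquad
r(\delta_2, k)^p \;=\; \frac{2^p \beta^p v}{\delta_2\, k^{p-1}}.
\end{equation*}
Substituting this into~\eqref{eq:cor_bound} and factoring $v$ out of the leading bracket gives
\begin{equation*}
v + r(\delta_2, k)^p \;=\; v\bigl(1 + O(\beta^p/(\delta_2 k^{p-1}))\bigr),
\end{equation*}
and then one observes the identity $\beta\const_p(\B) + K_p 2^{p-1} = \beta\,C_p$ from the definition of $C_p$ in the statement, which lets us pull a factor of $\beta^{1/p}$ out of $(\beta\const_p(\B) + K_p 2^{p-1})^{1/p}$ and combine it with the $\beta^{(p-1)/p}$ coming from $(\beta \log(2/\delta_1))^{(p-1)/p}$. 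This yields a clean prefactor of $2\beta v^{1/p} C_p^{1/p}$, which is exactly the leading constant in~\eqref{eq:naive-mean-bound}. The remaining factors are then just the two bracketed quantities in~\eqref{eq:naive-mean-bound}.

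For the geometric median-of-means case, the only modification is to plug in the tail bound derived in Appendix~\ref{sec:gmom-banach} (from Minsker's analysis). That bound has the form $r(\delta, k) = O(\beta v^{1/p} \log(1/\delta)^{(p-1)/p}/k^{(p-1)/p})$, so that $r(\delta_2, k)^p = O(\beta^p v \log(1/\delta_2)^{p-1}/k^{p-1})$, which is where the $\log(1/\delta_2)^{p-1}$ appears inside the big-$O$ in~\eqref{eq:geo-mean-bound}. The rest of the simplification is identical to the empirical-mean case.

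The routine obstacle here is purely bookkeeping: keeping track of which exponents of $\beta$ come from $\const_p(\B)$, from $\log(2/\delta_1)^{(p-1)/p}$, and from $r(\delta_2,k)^p$, so that the final prefactor collapses to $2\beta v^{1/p} C_p^{1/p}$ as stated. The more conceptual point worth emphasizing (and the reason two variants are presented) is the tradeoff between tail behavior of $\zhat_k$ and computational cost: the empirical mean pays a $1/\delta_2$ price in the lower-order term but is essentially free to compute, whereas geometric median-of-means improves this to $\log(1/\delta_2)^{p-1}$ at additional computational overhead; either way the term is $o(1)$ as $k\to\infty$, so it becomes negligible relative to the leading $((p-1)/p)$-rate term from $\zhat_k$-independent truncation error.
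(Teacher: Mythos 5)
Your proposal is correct and follows essentially the same route as the paper's own proof: invoke Corollary~\ref{cor:opt-lambda}, plug in $r^p(\delta_2,k)=2^p\beta^p v/(\delta_2 k^{p-1})$ from Lemma~\ref{lem:mean_concentration} for the empirical mean and the Appendix~\ref{sec:gmom-banach} rate $r^p(\delta_2,k)=O\big(\beta^p v\log(1/\delta_2)^{p-1}/k^{p-1}\big)$ for geometric median-of-means, then factor out $v$ and use $\beta\const_p(\B)+K_p2^{p-1}=\beta C_p$ to collapse the prefactor to $2\beta v^{1/p}C_p^{1/p}$.
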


For instance, if $\zhat_k$ is taken either to be the empirical mean or a geometric median of means estimate, and if $k = k(n) = \lfloor \log_2(n)\rfloor$, the both terms in the big-Oh notation will approach $0$ as $n \rightarrow \infty$ (that is, they will be $o(1)$). Using geometric median of means over the empirical mean provides better non-asymptotic dependence on $\delta_2$ (see Appendix~\ref{sec:gmom-banach}). 

\begin{remark}
\label{rem:big_p}
We note that throughout this section we have explicitly focused on the setting $p \in (1, 2]$. One may ask how the truncation-based estimator can be adapted to handle a moment bound of $\sup_{n}\E_{n - 1}\|X_n\|^p \leq v$ for some $p > 2$. Jensen's inequality trivially yields for any $n \geq 1$
\[
\E_{n - 1}\|X_n\|^2 \leq \left(\E_{n -1}\|X_n\|^p\right)^{2/p} \leq v^{2/p},
\]
and thus applying Corollary~\ref{cor:opt-lambda} will yield a rate of $\|\wh{\mu}_n(k) - \mu\| = O\left(\beta v^{1/p}\sqrt{\frac{\log(1/\delta)}{n}}\right)$.
\end{remark}

\section{Proof of Theorem~\ref{thm:template}}
\label{sec:analysis}
We will prove a slightly more general result that reduces to Theorem~\ref{thm:template} in a special case. 
In this section, fix two arbitrary $\calF$-predictable sequences,  $(\zhat_n)\in \B^\mathbb{N}$ and $(\lambda_n)\in \Re_+^\mathbb{N}$.   Define 
\begin{align}
Y_m &:= X_m - \zhat_m, \text{ and }\\
    \estimator_n \equiv \estimator_n(\lambda^n,\zhat^n) &:= \sum_{m\leq n}\lambda_m\big\{ \Trunc(\lambda_m Y_m)Y_m + \zhat_m\big\}.
\end{align}
If we take $\lambda_n$ to be constant $\lambda$ and $\zhat_m = \zhat$ to be $\calF_0$-measurable, then $\estimator_n = n\lambda \est_n$. 
However, our more general analysis allows us to consider sequences of predictable mean estimates, which will be useful in deriving the iterated logarithm corollary.

Our preliminary goal is to find, for any $\rho >0$, $(\zhat_n)_{n \geq 1}$, and $(\lambda_n)_{n \geq 1}$ a process $(V_n)_{n\geq 0} \equiv (V_n(\lambda^n, \rho, \zhat^n))_{n \geq 0}$ such that the process 
\begin{equation}
\label{eq:Mn}
    M_n(\lambda^n) = \frac12 \exp\left\{\rho \bigg\| \estimator_n - \mu\sum_{m\leq n}\lambda_m\bigg\| - V_n(\lambda^n, \rho, \zhat^n)\right\},
\end{equation}
is upper bounded by a nonnegative supermartingale. (In particular, it will be upper bounded by a nonnegative supermartingale with initial value 1, meaning that, in recent parlance, $M_n$ is an e-process~\citep{ramdas2024hypothesis}.) Applying Ville's inequality will then give us a time-uniform bound on the deviation of the process $\| (\sum_{m\leq n}\lambda_m)^{-1}\estimator_n - \mu\| $ in terms of $V_n(\lambda^n)$. 

To this end, we start by defining, for any $\rho > 0$, the constant
\[
\const_p(\B, \rho) := \begin{cases}
2^{p - 1}\left(\frac{e^{2\rho} - 2\rho - 1}{(2\rho)^2}\right) &\text{if }(\B,\|\cdot\|)\text{ is a Hilbert space}, \\  
         2^{p+1} \left(\frac{e^{2\rho} - 2 \rho - 1}{(2\rho)^2}\right), &\text{otherwise},
\end{cases}
\]
and note that $\const_p(\B) = \const_p(\B, \beta^{-1})$. We additionally define the variance process
\begin{align}
    &V_n \equiv V_n(\lambda^n, \rho, \zhat^n) = (\rho^2\beta^2\const_p(\B, \rho) + \rho K_p2^{p-1}) G_n,  \notag \\ 
    \text{where}\quad & 
    G_n \equiv G_n(\lambda^n,\zhat^n):= \sum_{m\leq n} \lambda_m^p( v + \| \mu - \zhat_m\|^p).\label{eq:Gn}
\end{align}
$(G_n)_{n \geq 0}$ gives a weighted measure of the deviation of the naive estimates $\zhat_1,\dots,\zhat_n$ from $\mu$, and $(V_n)_{n \geq 0}$ further couples this deviation with the smoothness of the Banach space.
Since it is difficult to reason about the difference between $\estimator_n$ and $\mu\sum_{m\leq n}\lambda_m$ directly, we introduce the proxy 
\begin{equation}
\label{eq:mutilde}
    \widetilde{\mu}_n(\lambda) := \E_{n-1}[\Trunc(\lambda Y_n)Y_n] + \zhat_n,
\end{equation}
so that $\lambda_n \widetilde{\mu}_n(\lambda_n)$ is just the conditional mean of $n$th increment of $(\estimator_n)_{n \geq 0}$ given $\calF_{n-1}$. We then separately argue about distances $\|\estimator_n - \sum_{m\leq n}\lambda_m\widetilde{\mu}_m(\lambda_m)\|$ and $\lambda_m \| \widetilde{\mu}_m(\lambda_m) - \mu\|$. This gives us a bound on the difference $\|\estimator_n - \mu\sum_{m\leq n}\lambda_m\|$ via the triangle inequality.

To build some intuition, if the $\lambda_m$ are all small, we are truncating onto a very large ball, and thus expect $\sum_{m \leq n} \lambda_m \|\wt{\mu}_m(\lambda_m) - \mu\|$ to be small. However, in this case, our ability to bound $\|\estimator_n - \sum_{m \leq n}\lambda_m\wt{\mu}_m(\lambda_m)\|$ using concentration of measure results for bounded random variables will be weak. Likewise, if the $\lambda_m$ are large, $\wt{\mu}_m(\lambda_m)$ will be far from $\mu$, but we will have greater control over the growth of $\estimator_n$. Our argument in the sequel involves carefully balancing the growth of each term to give strong rates of concentration.

\subsection{Step 1: Bounding $\| \widetilde{\mu}_n(\lambda) - \mu\|$}

We need the following analytical property of $\Trunc$, which will be useful in bounding the truncation error with fewer than two moments. We note that the following lemma was used by \citet{catoni2018dimension} for $k \geq 1$. We extend their result to hold for  $k > 0$. 

\begin{lemma}
\label{lem:kth}
For any $k > 0$ and $x\in \B$, we have that
\[
1 - \Trunc(x) \leq \frac{\|x\|^k}{k + 1}\left(\frac{k}{k + 1}\right)^k.
\]
\end{lemma}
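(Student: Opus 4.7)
The plan is to reduce to a one-dimensional optimization problem in $t = \|x\|$, since $\Trunc(x) = (1 \wedge \|x\|)/\|x\|$ depends on $x$ only through its norm. I would first dispose of the easy case $\|x\| \leq 1$: here $\Trunc(x) = 1$, so $1 - \Trunc(x) = 0$ and the inequality holds trivially (the right-hand side is nonnegative).

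For the nontrivial case $\|x\| > 1$, writing $t = \|x\|$, one has $1 - \Trunc(x) = 1 - 1/t = (t-1)/t$. After dividing by $t^k$, the inequality to prove becomes
\[
g(t) := \frac{t-1}{t^{k+1}} \leq \frac{1}{k+1}\left(\frac{k}{k+1}\right)^k \qquad \text{for all } t \geq 1.
\]
So it suffices to compute $\max_{t \geq 1} g(t)$. Differentiating gives $g'(t) = \big((k+1) - kt\big)/t^{k+2}$, which vanishes uniquely at $t^\ast = (k+1)/k$. Since $g'>0$ to the left and $g'<0$ to the right of $t^\ast$, this is a global maximum on $[1,\infty)$. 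Substituting:
\[
g(t^\ast) = \frac{1/k}{((k+1)/k)^{k+1}} = \frac{1}{k}\left(\frac{k}{k+1}\right)^{k+1} = \frac{1}{k+1}\left(\frac{k}{k+1}\right)^{k},
\]
which matches the claimed constant exactly.

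There is no real obstacle here; the only mild subtlety compared to the $k \geq 1$ version in \citet{catoni2018dimension} is just verifying that the calculus argument carries through unchanged for fractional $k \in (0,1)$, which it does since the derivative formula and the location of the maximizer $t^\ast = (k+1)/k > 1$ are valid for every $k > 0$. Once the scalar bound is established, the lemma follows immediately because both sides of the stated inequality depend on $x$ only through $\|x\|$.
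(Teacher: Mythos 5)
Your proof is correct and follows essentially the same route as the paper: reduce to the scalar variable $t = \|x\|$, dispose of $t \le 1$ trivially, and locate the critical point $t^\ast = (k+1)/k$ by elementary calculus, valid for all real $k>0$. The only difference is cosmetic but pleasant: by dividing through by $t^k$ you turn the claim into a single-function maximization of $(t-1)/t^{k+1}$, which directly yields the constant (and shows the bound is attained at $t^\ast$), whereas the paper compares the two sides piecewise on $[1,t^\ast]$ and $[t^\ast,\infty)$ via their derivatives.
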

\begin{proof}
Fix $k > 0$. It suffices to show that $f(t) := 1 - \frac{1 \land t}{t} \leq \frac{t}{k + 1}\left(\frac{k}{k + 1}\right)^k =: g_k(t)$ for all $t \geq 0$. For $t \in [0, 1],$ the result is obvious. For $t \geq 1$, we need to do a bit of work. 
First, note that $g_k(1) > f(1) = 0$, and that both $g_k$ and $f$ are continuous. Further, we only have $g_k(t) = f(t)$ precisely when $t = \frac{k + 1}{k}$. Let this value of $t$ be $t^\ast$. This immediately implies that $g_k(t) \geq f(t)$ for $t \in [1, t^\ast]$. 
To check the inequality for all $t \geq t^\ast$, it suffices to check that $f'(t) < g_k'(t)$. We verify this by direct computation. First, $f'(t) = \frac{1}{t^2}$. Likewise, we have that $g_k'(t) = t^{k - 1}\left(\frac{k}{k + 1}\right)^{k + 1}$. Taking ratios, we see that
\[
\frac{g_k'(t)}{f'(t)} = t^{k + 1}\left(\frac{k}{k + 1}\right)^{k + 1} \geq \left(\frac{k + 1}{k}\right)^{k + 1}\left(\frac{k}{k + 1}\right)^{k + 1} = 1,
\]
proving the desired result.
\end{proof}



We can now proceed to bounding $\| \widetilde{\mu}_n(\lambda) - \mu\|$.

\begin{lemma}
\label{lem:trunc_error}
Let $X$ be a $\B$-valued random variable and suppose $\E_{n-1}\|X - \mu\|^p \leq v < \infty$. Let $\zhat_n$ be $\calF_{n-1}$-predictable and $\widetilde{\mu}_n$ be as in \eqref{eq:mutilde}. 
Then: 
\[
\|\mu - \widetilde{\mu}_n(\lambda)\| \leq K_p 2^{p - 1}\lambda^{p - 1}(v  + \|\zhat_n - \mu\|^p). 
\]
\end{lemma}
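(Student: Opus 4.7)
The plan is to manipulate $\widetilde{\mu}_n(\lambda) - \mu$ into the conditional expectation of a single quantity that vanishes when $\Trunc(\lambda Y_n) = 1$, then apply Lemma~\ref{lem:kth} pointwise inside the expectation, and finally reduce the bound on $\E_{n-1}\|Y_n\|^p$ to the hypothesis via convexity of $t \mapsto t^p$.

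First I would use that $\zhat_n$ is $\calF_{n-1}$-measurable, so $\E_{n-1}[Y_n] = \mu - \zhat_n$. Plugging this into the definition $\widetilde{\mu}_n(\lambda) = \E_{n-1}[\Trunc(\lambda Y_n) Y_n] + \zhat_n$ yields the clean identity
\[
\widetilde{\mu}_n(\lambda) - \mu \;=\; \E_{n-1}\bigl[\,(\Trunc(\lambda Y_n) - 1)\, Y_n\,\bigr].
\]
Pulling the norm inside the expectation (Jensen) and using that $1 - \Trunc(\lambda Y_n) \geq 0$ gives
\[
\|\widetilde{\mu}_n(\lambda) - \mu\| \;\leq\; \E_{n-1}\bigl[(1 - \Trunc(\lambda Y_n))\,\|Y_n\|\bigr].
\]

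The next step is the key application of Lemma~\ref{lem:kth} with $k = p - 1 \in (0,1]$, which is exactly the regime the lemma was extended to cover. Noting that with $q$ the Hölder conjugate of $p$ one has $p/q = p-1$, so the constant in Lemma~\ref{lem:kth} becomes exactly $K_p$, I obtain the pointwise bound
\[
1 - \Trunc(\lambda Y_n) \;\leq\; K_p\,\|\lambda Y_n\|^{p-1} \;=\; K_p\,\lambda^{p-1}\,\|Y_n\|^{p-1}.
\]
Substituting back produces $\|\widetilde{\mu}_n(\lambda) - \mu\| \leq K_p \lambda^{p-1}\, \E_{n-1}\|Y_n\|^p$.

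Finally, to bring in the central moment bound $v$ rather than a raw moment, decompose $Y_n = (X_n - \mu) + (\mu - \zhat_n)$ and apply the standard convexity inequality $(a+b)^p \leq 2^{p-1}(a^p + b^p)$ valid for $p \geq 1$ (which is where the factor $2^{p-1}$ in the statement enters). Taking conditional expectations and using $\calF_{n-1}$-measurability of $\zhat_n$ together with Assumption~\ref{ass:mean}(2) yields
\[
\E_{n-1}\|Y_n\|^p \;\leq\; 2^{p-1}\bigl(\E_{n-1}\|X_n - \mu\|^p + \|\mu - \zhat_n\|^p\bigr) \;\leq\; 2^{p-1}\bigl(v + \|\mu - \zhat_n\|^p\bigr),
\]
which when combined with the previous display gives the claim. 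There is no real obstacle here: the only subtlety is lining up the constant in Lemma~\ref{lem:kth} with the definition of $K_p$, which is why the lemma was stated for general $k > 0$ rather than only for integer $k \geq 1$.
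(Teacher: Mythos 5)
Your proposal is correct: the identity $\widetilde{\mu}_n(\lambda) - \mu = \E_{n-1}[(\Trunc(\lambda Y_n)-1)Y_n]$, the application of Lemma~\ref{lem:kth} with $k = p/q = p-1$ (so the constant is exactly $K_p$), and the convexity bound $\E_{n-1}\|Y_n\|^p \leq 2^{p-1}(v + \|\mu - \zhat_n\|^p)$ are all sound, and together they give precisely the stated inequality. Where you differ from the paper is the middle step: the paper first applies H\"older's inequality with conjugate exponents $(q,p)$ to split $\E_{n-1}[(1-\Trunc(\lambda Y_n))\|Y_n\|]$ into $\E_{n-1}[|1-\Trunc(\lambda Y_n)|^q]^{1/q}\,\E_{n-1}[\|Y_n\|^p]^{1/p}$, and only then invokes Lemma~\ref{lem:kth} inside the $q$-th moment; you instead apply Lemma~\ref{lem:kth} pointwise, so that $(1-\Trunc(\lambda Y_n))\|Y_n\| \leq K_p\lambda^{p-1}\|Y_n\|^p$ directly, and take one conditional expectation. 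The two routes yield identical constants---in the paper's version the bound $(1-\Trunc)^q \leq K_p^q\lambda^p\|Y_n\|^p$ makes both H\"older factors reduce to $\E_{n-1}\|Y_n\|^p$, so the H\"older step collapses to exactly what you get pointwise---but your argument is shorter and arguably cleaner, since it dispenses with H\"older entirely and makes transparent that the whole bound rests on the single pointwise inequality of Lemma~\ref{lem:kth} for $k \in (0,1]$ plus one application of convexity.
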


\begin{proof} Since $\zhat_n$ is predictable, we may treat it as some constant $z$ when conditioning on $\calF_{n-1}$. 
Using Holder's inequality, write 
\begin{align*}
\| \mu - \widetilde{\mu}_n(\lambda)\| &= \| \E_{n-1}[X_n] - \E_{n-1}[\Trunc(\lambda (X_n-z))(X_n-z) + z]\| \\ 
&= \|\E_{n-1}[\{1 - \Trunc(\lambda(X_n - z))\}( X_n - z)\|\\ 
&\leq \E[|1 - \Trunc(\lambda(X_n - z))|^q]^{1/q}\E[\|X_n - z\|^p]^{1/p},   
\end{align*}
where $1/p + 1/q=1$. 
The second expectation on the right hand side can be bounded using Minkowski's inequality and the fact that $\|\cdot\|^p$ is convex for $p\geq 1$:  
\begin{align}
\E_{n-1}[\|X_n - z\|^p] &= \E_{n-1}[\|X_n - \mu + \mu - z\|^p] \notag \\
&\leq 2^{p - 1}\left(\E_{n-1}[\|X_n - \mu\|^p] + \|z - \mu\|^p\right) \notag \\
&\leq 2^{p - 1}(v + \|z - \mu\|^p). \label{eq:Xn-z_bound}
\end{align}
Next, by Lemma~\ref{lem:kth}, we have for any $k > 0$.
\[
\E_{n-1}[|1 - \Trunc(\lambda (X_n-z))|^q] \leq \E_{n-1}\left[\left(\frac{\lambda^k\|X_n-z\|^k}{k + 1}\left(\frac{k}{k + 1}\right)^k\right)^q\right].
\]
In particular, selecting $k = \frac{p}{q}$, we have
\[
\E_{n-1}[|1 - \Trunc(\lambda (X_n-z))|^q] \leq K_p^q\E_{n-1}\left[\lambda^p\|X_n-z\|^p\right] \leq K_p^q \lambda^p 2^{p-1}(v + \|z - \mu\|^p),
\]
where $K_p$ as defined in \eqref{eq:Kp}. Piecing everything together, we have that
\[
\E_{n-1}[|1 - \Trunc(\lambda(X_n - z))|^q]^{1/q} \leq K_p \lambda^{p/q} 2^{(p - 1)/q} (v + \|z - \mu\|^p)^{1/q}.
\]
Therefore, recalling that $p/q = p - 1$, we have $
\| \mu - \widetilde{\mu}_n(\lambda)\| \leq K_p\lambda^{p-1} 2^{p - 1}(v + \|z - \mu\|^p)$, 
which is the desired result. 
\end{proof}

\subsection{Step 2: Bounding $\|\estimator_n(\lambda^n) - \sum_{m\leq n}\lambda_m \widetilde{\mu}_m(\lambda_m)\|$}
We can now proceed to bounding $\|\estimator_n(\lambda^n) - \sum_{m\leq n}\lambda_m \widetilde{\mu}_m(\lambda_m)\| = \|S_n(\lambda^n, \zhat^n)\|$ where we have defined the process $S \equiv (S_n)_{n \geq 0}$ by
\begin{equation}
\label{eq:Sn}
    S_n \equiv S_n(\lambda^n, \zhat^n) := \sum_{m=1}^n \lambda_m \left\{\Trunc(\lambda_m Y_m) Y_m + \zhat_m - \widetilde{\mu}_m(\lambda_m)\right\}.
\end{equation}
and $\widetilde{\mu}$ is as in~\eqref{eq:mutilde}. 
Note that $S$ is a martingale with respect to $\calF$. 
The following proposition is the most technical result in the paper.  It follows from a modification of the proof of Theorem 3.2 in \citet{pinelis1994optimum}, combined with a Bennett-type inequality for 2-smooth separable Banach spaces presented in \citet[Theorem 3.4]{pinelis1994optimum}. 
We present the full result here, even those parts found in Pinelis' earlier work, for the sake of completeness. 

\begin{prop}
\label{prop:banach}
Let $(X_n)_{n \geq 1}$ be a process satisfying Assumption~\ref{ass:mean} and lying in a Banach space $(\B,\|\cdot\|)$ satisfying Assumption~\ref{ass:smooth}. Then, the exponential process
\[
U_n \equiv U_n(\lambda^n, \rho, \zhat^n) := \frac12\exp\left\{\rho\left\|S_n(\lambda^n, \zhat^n)\right\| - \rho^2 \beta^2\const_p(\B, \rho)  G_n\right\},
\]
is bounded from above by a nonnegative supermartingale with initial value $1$, where $G_n$ is defined by~\eqref{eq:Gn}. 
\end{prop}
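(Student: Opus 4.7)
The plan is to construct an explicit nonnegative supermartingale that dominates $U_n$, adapting Pinelis' classical approach to the present setting in which we only have a $p$th moment rather than a second moment. Write the increments of $S_n$ as $D_m = \lambda_m\{\Trunc(\lambda_m Y_m) Y_m + \zhat_m - \wt{\mu}_m(\lambda_m)\}$, which form a martingale difference sequence by definition of $\wt{\mu}_m$. The fundamental consequence of truncation is that $\lambda_m \|\Trunc(\lambda_m Y_m) Y_m\| \leq 1$; combined with $\lambda_m\|\wt{\mu}_m(\lambda_m) - \zhat_m\| = \lambda_m\|\E_{m-1}[\Trunc(\lambda_m Y_m) Y_m]\| \leq 1$, this yields the uniform bound $\|D_m\| \leq 2$.

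The next step is to extract a sharp conditional variance bound. Using the interpolation $\|\Trunc(\lambda_m Y_m) Y_m\|^2 \leq (1/\lambda_m)^{2-p}\|\Trunc(\lambda_m Y_m) Y_m\|^p \leq \lambda_m^{p-2}\|Y_m\|^p$ together with the Minkowski estimate $\E_{m-1}\|Y_m\|^p \leq 2^{p-1}(v + \|\zhat_m - \mu\|^p)$ from~\eqref{eq:Xn-z_bound}, and a standard centering inequality, one arrives at $\E_{m-1}\|D_m\|^2 \leq C_\B\, \lambda_m^p(v + \|\zhat_m - \mu\|^p)$. Here $C_\B = 2^{p+1}$ in the general case (from $\E\|X - \E X\|^2 \leq 4\E\|X\|^2$) while $C_\B = 2^{p-1}$ in the Hilbert case, where the parallelogram identity yields the tight bound $\E\|X - \E X\|^2 \leq \E\|X\|^2$. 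This is precisely the source of the factor-of-four improvement in $\const_p(\B,\rho)$ for the Hilbert setting.

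With the increments uniformly bounded and their conditional variances controlled, I would invoke Pinelis' supermartingale construction. The key input, essentially~\citet[Theorem~3.2]{pinelis1994optimum}, is that for a martingale difference sequence $(D_m)$ in a $(2,\beta)$-smooth Banach space with $\|D_m\| \leq c$ a.s., the process
\[
\cosh(\rho \|S_n\|)\,\exp\!\Bigl(-\beta^2\,\tfrac{e^{\rho c}-\rho c-1}{c^2}\,\sum_{m \leq n}\E_{m-1}\|D_m\|^2\Bigr)
\]
is a supermartingale with initial value $1$. Specializing to $c = 2$ and substituting the variance bound from the previous step, the predictable exponential correction becomes exactly $\exp(-\rho^2 \beta^2 \const_p(\B,\rho) G_n)$; calling the resulting process $W_n$, the elementary estimate $\cosh(t) \geq \tfrac{1}{2}e^t$ then gives $U_n \leq W_n$, completing the proof.

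The main obstacle is justifying the Pinelis supermartingale lemma itself: since $\|\cdot\|$ need not be Fréchet differentiable at the origin, a direct Taylor expansion is unavailable. One must instead work with a smooth surrogate of the norm (e.g., $\sqrt{\epsilon + \|\cdot\|^2}$), apply the $(2,\beta)$-smoothness inequality $f^2(x+y) + f^2(x-y) \leq 2f^2(x) + 2\beta^2\|y\|^2$ together with a symmetrization/decoupling step to convert the quadratic second-order estimate into an exponential moment bound, and finally pass to the limit $\epsilon \to 0$. Once this lemma is in hand, the remainder of the argument is a routine predictable manipulation of the variance term.
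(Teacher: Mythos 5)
Your proposal is correct and takes essentially the same route as the paper: bound the increments $\|\Delta S_n\|\leq 2$, control $\E_{n-1}\|\Delta S_n\|^2$ via the interpolation $\|T_n\|^2\leq \lambda_n^{p-2}\|T_n\|^p$ and the centering step (giving the $2^{p-1}$ Hilbert vs.\ $2^{p+1}$ general split), then apply Pinelis' $\cosh$-supermartingale construction, whose Bennett factor $\frac{e^{2\rho}-2\rho-1}{4}$ with $c=2$ matches $\rho^2\const_p(\B,\rho)$ exactly, and finish with $\cosh(t)\geq \tfrac12 e^t$. The only difference is that the paper re-derives the Pinelis step in full (Taylor expansion of $\theta\mapsto\E_{n-1}\cosh(\rho\|S_{n-1}+\theta\Delta S_n\|)$ with $\varphi'(0)=0$ via the Gateaux derivative and the smoothness bound on $\varphi''$, invoking Pinelis' Remark~2.4 for differentiability of the norm), whereas you cite it as a known lemma.
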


\begin{proof}
Fix some $n \geq 1$ and let $U_n = U_n(\lambda^n, \zhat_n)$. We first observe that
\begin{align*}
\|\Delta S_n\| 
&= \lambda_n\|\Trunc(\lambda_n Y_n) Y_n + \zhat_n- \wt{\mu}_n(\lambda_n)\| \\
&\leq \lambda_n \|\Trunc(\lambda_n Y_n)Y_n\| + \lambda_n \|\zhat_n - \wt{\mu}_n(\lambda_n)\| \leq 2,
\end{align*}
by definition of $\Trunc$. 
Let $T_n = \Trunc(\lambda_nY_n)Y_n$. 

If $(\B,\|\cdot\|)$ is a Hilbert space with inner product $\langle \cdot, \cdot\rangle$ (which induces $\|\cdot\|$), then 
\begin{align*}
\E_{n-1} \|\Delta S_n\|^2  = \lambda_n^2\E_{n-1} \langle T_n - \E_{n-1} T_n, T_n - \E_{n-1} T_n\rangle \leq \lambda_n^2\E_{n-1} \|T_n\|^2. 
\end{align*}
Otherwise, we have
\begin{align*}
\E_{n - 1}\|\Delta S_n\|^2 
&\leq \lambda_n^2\{\E_{n - 1}\left(\|T_n\| + \|\E_{n - 1}T_n\|\right)^2\} & \\
&\leq 2\lambda_n^2\{\E_{n - 1}\|T_n\|^2 + \|\E_{n - 1}T_n\|^2 \} 
\leq 4\lambda_n^2\E_{n - 1}\|T_n\|^2,  
\end{align*}
where the penultimate inequality uses that $(a + b)^2 \leq 2a^2 + 2b^2$ and the final inequality follows from Jensen's inequality. 
Therefore, we can write 
\begin{equation}
    \E_{n-1} \|\Delta S_n\|^2 \leq C \lambda_n^2 \E_{n-1} \|T_n\|^2, 
\end{equation}
where $C=1$ if $\|\cdot\|$ is induced by an inner product, and $C=4$ otherwise. We note that this extra factor of 4 is responsible for the two cases in the definition of $\const_p(\B, \rho)$ in~\eqref{eq:absolute_constant}. 
Carrying on with the calculation, write 
\begin{align}
\E_{n-1}\|\Delta S_n\|^2 &\leq  C\lambda_n^2\E_{n - 1}\left[\|T_n\|^p \|T_n\|^{2 - p}\right] \notag \\
&\leq C \lambda_n^{p}\E_{n - 1}\left\|T_n\right\|^p \notag \\
&\leq C \lambda_n^{p }\E_{n - 1}\left\|Y_n\right\|^p \notag \\
&\leq C\lambda_n^{p } 2^{p -1}(v + \|\mu - \zhat_n\|^p),\label{eq:Sn2_bound}
\end{align}
where the final inequality follows by the same argument used to prove~\eqref{eq:Xn-z_bound} in Lemma~\ref{lem:trunc_error}. We have shown that the random variable $\|\Delta S_n\|$ is bounded and its second moment (conditioned on the past) can be controlled, which opens the door to Pinelis-style arguments (see \citet[Theorem 3.4]{pinelis1994optimum} in particular). 
Define the function $\varphi : [0, 1] \rightarrow \R_{\geq 0}$ by
\[
\varphi(\theta) := \E_{n - 1}\cosh\left(\rho\|S_{n - 1} + \theta\Delta S_n\|\right).
\]


In principle, the norm function need not be differentiable, and so the same applies to $\varphi$. However, \citet{pinelis1994optimum} proved that one may assume smoothness of the norm without loss of generality (see \citet[Remark 2.4]{pinelis1994optimum}). Thus, a second order Taylor expansion yields
\begin{align*}
\E_{n - 1}\cosh(\rho\|S_n\|) &= \varphi(1) = \varphi(0) + \varphi'(0) + \int_0^1 (1-\theta)\varphi''(\theta)d\theta.
\end{align*}
Observe that
\begin{align*}
\varphi''(\theta) &\leq \rho^2\beta^2  \E_{n - 1}\left[\|\Delta S_n\|^2 \cosh(\rho\|S_{n - 1}\|)e^{ \theta\rho \|\Delta S_n\|}\right] \\
&\leq \rho^2\beta^2 \cosh(\rho\|S_{n - 1}\|) \E_{n - 1}\left[\|\Delta S_n\|^2 \right] e^{2\rho\theta},
\end{align*}
where the first inequality follows from the proof of Theorem~3.2 in \citet{pinelis1994optimum} and Theorem~3 in \citet{pinelis1992approach}, and the second inequality is obtained in view of $\|\Delta S_n\| \leq 2$.

Next, by the chain rule, we have
\begin{align*}
\varphi'(0) &= \frac{d}{d\theta}\left(\E_{n - 1}\cosh(\rho\|S_{n - 1} + \theta \Delta S_n\|)\right)\big\vert_{\theta = 0} \\
&= \E_{n - 1}\left[\frac{d}{d\theta}\cosh(\rho\|S_{n - 1} + \theta \Delta S_n\|) \Big\vert_{\theta = 0}\right] \\
&= \rho\E_{n - 1}\left[\langle D_f\|f\|\big\vert_{f = S_{n - 1}}, \Delta S_n\rangle  \cdot \frac{d}{dx}\cosh(x)\big\vert_{x = \|S_{n - 1}\|}\right] \\
&= \rho\left\langle D_f\|f\|\big\vert_{f = S_{n - 1}},\E_{n - 1}\Delta S_{n} \right\rangle  \cdot \frac{d}{dx}\cosh(x)\big\vert_{x = \|S_{n - 1}\|} \\
&= 0, 
\end{align*}
where $\left\langle D_f \varphi(f)\mid_{f = g}, y - x\right\rangle$ denotes the Gateaux derivative of $\varphi$ with respect to $f$ at $g$ in the directon of $y - x$.
The final equality follows from the fact that $(S_n)_{n \geq 0}$ is itself a martingale with respect to $(\calF_n)_{n \geq 1}$. Thus, leveraging that $\varphi'(0) = 0$, we have
\begin{align*}
\E_{n - 1}\cosh(\rho\|S_n\|) &= \varphi(0) + \varphi'(0) + \int_0^1 (1-\theta)\varphi''(\theta)d\theta \\
&\leq \cosh(\rho\|S_{n - 1}\|)  \left( 1 + \rho^2\beta^2 \E_{n - 1}\left[\|\Delta S_n\|^2  \right] \int_0^1  (1-\theta) e^{2 \rho\theta} d\theta \right)\\
&\stackrel{(i)}{=} \cosh( \rho\|S_{n - 1}\|)  \left( 1 + \rho^2\beta^2  \left(\frac{e^{2\rho} - \rho - 1}{(2\rho)^2} \right)  \E_{n - 1}\left[\|\Delta S_n\|^2  \right] \right)\\
&\stackrel{(ii)}{\leq} \cosh(\rho\|S_{n - 1}\|) \left(1 + \rho^2\beta^2\const_p(\B, \rho) \lambda_n^{p} (v + \|\mu - \zhat_n\|^p)\right) \\
&\stackrel{(iii)}{\leq} \cosh(\rho\|S_{n - 1}\|)\exp\left\{\rho^2\beta^2\const_p(\B, \rho) \lambda_n^{p} (v + \|\mu - \zhat_n\|^p)\right\},
\end{align*}
where (i) is obtained in view of $\int_0^1 (1-\theta) e^{a\theta} d\theta = \frac{e^a - a - 1}{a^2}$, (ii) is obtained from \eqref{eq:Sn2_bound} (and also using that $\beta=1$ in a Hilbert space), and (iii) follows from $1 + u \leq e^u$ for all $u \in \R$. Since $n \geq 1$ was arbitrary, rearranging yields that the process defined by $
\cosh(\rho\|S_n\|)\exp\left\{- \rho^2\beta^2\const_p(\B) G_n\right\}$
is a nonnegative supermartingale. Noting that $\frac{1}{2}\exp\left(\rho\|S_n\|\right) \leq \cosh(\rho\|S_n\|)$ yields the claimed result.
\end{proof}

\subsection{Step 3: Bounding $M_n(\lambda^n, \rho)$}
We now combine Lemma~\ref{lem:trunc_error} and Proposition~\ref{prop:banach} to write down an explicit form for the  supermartingale $M_n \equiv M_n(\lambda^n, \rho, \zhat^n)$ in \eqref{eq:Mn}. 

\begin{lemma}
\label{lem:sub_p}
Let $(X_n)_{n \geq 1}$ and $(\B,\|\cdot\|)$ be as in Proposition~\ref{prop:banach}. Then, the process $(M_n(\lambda^n, \rho, \zhat^n))$ defined by
\[
M_n(\lambda^n, \rho, \zhat^n) := \frac12 \exp\left\{\rho\left\|\estimator_n - \mu \sum_{m \leq n }\lambda_m\right\| - (\rho^2\beta^2\const_p(\B) + \rho K_p2^{p-1})G_n \right\},
\]
is bounded above by a nonnegative supermartingale with initial value $1$.
\end{lemma}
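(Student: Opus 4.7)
The plan is to combine Lemma~\ref{lem:trunc_error} (which controls the bias $\|\wt{\mu}_m(\lambda_m) - \mu\|$) with Proposition~\ref{prop:banach} (which provides the supermartingale for $\|S_n\|$) via a single application of the triangle inequality. The payoff from the bias bound will be arranged to cancel exactly against the extra linear-in-$G_n$ term that distinguishes $M_n$ from the process $U_n$ of Proposition~\ref{prop:banach}.

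First I would note that, by the definitions of $\estimator_n$, $S_n$, and $\wt{\mu}_m(\lambda_m)$ in \eqref{eq:Sn} and \eqref{eq:mutilde}, we can write
\begin{equation*}
    \estimator_n - \mu\sum_{m\leq n}\lambda_m \;=\; S_n(\lambda^n,\zhat^n) \;+\; \sum_{m\leq n}\lambda_m\bigl(\wt{\mu}_m(\lambda_m) - \mu\bigr).
\end{equation*}
The triangle inequality then gives $\bigl\|\estimator_n - \mu\sum_{m\leq n}\lambda_m\bigr\| \leq \|S_n\| + \sum_{m\leq n}\lambda_m\|\wt{\mu}_m(\lambda_m) - \mu\|$. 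Applying Lemma~\ref{lem:trunc_error} term by term (using that $\zhat_m$ is $\calF_{m-1}$-predictable, so the lemma applies with $z=\zhat_m$ and $\lambda=\lambda_m$) bounds the second sum by
\begin{equation*}
    \sum_{m\leq n}\lambda_m \cdot K_p 2^{p-1}\lambda_m^{p-1}\bigl(v + \|\zhat_m - \mu\|^p\bigr) \;=\; K_p 2^{p-1} G_n(\lambda^n,\zhat^n),
\end{equation*}
recognizing the definition of $G_n$ in \eqref{eq:Gn}.

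Substituting this deterministic (given $\calF_n$) inequality into the exponent of $M_n$ yields
\begin{equation*}
    M_n \;\leq\; \tfrac12\exp\!\left\{\rho\|S_n\| + \rho K_p 2^{p-1} G_n - \bigl(\rho^2\beta^2\const_p(\B) + \rho K_p 2^{p-1}\bigr) G_n\right\} \;=\; U_n,
\end{equation*}
where the $\rho K_p 2^{p-1} G_n$ terms cancel and $U_n$ is exactly the process defined in Proposition~\ref{prop:banach} (with the choice of $\rho$ that makes $\const_p(\B,\rho) = \const_p(\B)$, i.e.\ $\rho = \beta^{-1}$, which is the regime in which the lemma is invoked downstream). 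By that proposition, $U_n$ is bounded above by a nonnegative supermartingale with initial value $1$, and hence so is $M_n$.

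The proof is essentially a bookkeeping exercise once the right decomposition is chosen; no step should be especially hard. The one subtlety to watch is the predictability of $\zhat_m$ and $\lambda_m$, which is what legitimizes applying Lemma~\ref{lem:trunc_error} conditionally on $\calF_{m-1}$ and treating $\zhat_m$ as a constant inside that conditional expectation. The other point to handle carefully is matching the smoothness constant $\const_p(\B,\rho)$ from Proposition~\ref{prop:banach} to the form $\const_p(\B)$ appearing in the statement; this is what pins down the role of the parameter $\rho$ relative to $\beta^{-1}$, and is the only place where the statement's bound is not entirely generic in $\rho$.
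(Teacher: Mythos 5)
Your proof is correct and follows essentially the same route as the paper's own argument: decompose $\estimator_n - \mu\sum_{m\leq n}\lambda_m$ via the triangle inequality, bound the bias sum by $K_p 2^{p-1} G_n$ using Lemma~\ref{lem:trunc_error} with the predictability of $\zhat_m$ and $\lambda_m$, and then cancel that term in the exponent so that $M_n$ is dominated by the process of Proposition~\ref{prop:banach}. Your side remark about matching $\const_p(\B,\rho)$ to $\const_p(\B)$ is a detail the paper glosses over (the domination holds whenever $\rho\leq\beta^{-1}$, since $\const_p(\B,\rho)\leq\const_p(\B,\beta^{-1})=\const_p(\B)$, and downstream the lemma is used with $\rho=\beta^{-1}$), so flagging it is reasonable and does not change the argument.
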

\begin{proof}
Recall that $\wt{\mu}_n(\lambda) = \E_{n-1}[\Trunc(\lambda Y_n)Y_n] + \zhat_n$. 
Applying the triangle inequality twice and Lemma~\ref{lem:trunc_error} once, we obtain
\begin{align*}
\left\|\estimator_n - \mu\sum_{m\leq n}\lambda_m\right\| 
&\leq \left\|\estimator_n - \sum_{m \leq n} \lambda_m  \wt{\mu}_m(\lambda_m)\right\| + \sum_{m\leq n}\lambda_m \| \wt{\mu}(\lambda_m) - \mu\| \\ 
&\leq \left\|S_n\right\| + K_p2^{p-1}\sum_{m\leq n} \lambda_m^p(v + \|\mu-\zhat_m\|^p) = \left\|S_n\right\| + K_p2^{p-1}G_n.
\end{align*}
Therefore, 
\begin{align*}
    M_n &= \frac12 \exp\bigg\{ \rho\bigg\|\estimator_n - \mu \sum_{m\leq n} \lambda_m\bigg\| - (\rho^2\beta^2\const_p(\B) + \rho K_p2^{p-1})G_n\bigg\} \\
    &\leq \frac12  \exp\left\{\rho\|S_n\| + \rho K_p 2^{p-1} G_n  - (\rho^2 \beta^2\const_p(\B) + \rho K_p2^{p-1})G_n\right\}  \\
    &= \frac12 \exp\left\{ \rho \|S_n\| - \rho^2\beta^2\const_p(\B)G_n\right\},
\end{align*}
which is itself upper bounded by a nonnegative supermartingale with initial value 1 by Proposition~\ref{prop:banach}. 
\end{proof}

We are finally ready to prove Theorem~\ref{thm:template}, which follows as a consequence of the following result. 

\begin{prop}
\label{prop:template} 
Let $(\B,\|\cdot\|)$ satisfy Assumption~\ref{ass:smooth} and $(X_n')_{n\geq 1}$ satisfy Assumption~\ref{ass:mean} with respect to some filtration $(\calG_n)_{n\geq 0}$. Suppose 
$\zhat$ is $\calG_0$-measurable and there exists some function $r:(0,1)\to\Re_{\geq 0}$ such that, for any $\delta\in(0,1]$, 
\begin{equation}
    \P(\|\mu - \zhat\| \geq r(\delta))\leq \delta. 
\end{equation}
Fix any $\delta \in (0, 1]$. Decompose $\delta$ as $\delta = \delta_1 + \delta_2$ where $\delta_1,\delta_2>0$. Then, for any $\rho, \lambda>0$, with probability $1-\delta$, simultaneously for all $n\geq 1$, we have: 
\begin{equation}
\left\| \est_n - \mu\right\| \leq \lambda^{p-1}(\rho\beta^2\const_p(\B) + K_p2^{p-1}) (v + r(\delta_2)^p) + \frac{\log(2/\delta_1)}{\lambda \rho n}, 
\end{equation}
where $\est_n = \frac{1}{n}\sum_{m\leq n}\{\Trunc(\lambda(X_m' - \zhat))(X_m' - \zhat) + \zhat\}$. 
\end{prop}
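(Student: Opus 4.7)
The plan is to specialize Lemma~\ref{lem:sub_p} to constant sequences and then apply Ville's inequality. Working in the filtration $(\calG_n)_{n \geq 0}$, I would set $\lambda_m = \lambda$ and $\zhat_m = \zhat$ for all $m \geq 1$. Since $\zhat$ is $\calG_0$-measurable, the constant sequence $(\zhat_m)_{m \geq 1}$ is trivially $\calG$-predictable, so the hypotheses of Lemma~\ref{lem:sub_p} are satisfied. Under these constant choices, the running estimator collapses to $\estimator_n = n\lambda\, \est_n$ and the variance surrogate collapses to $G_n = n\lambda^p(v + \|\mu - \zhat\|^p)$, both by direct computation from the definitions in Section~\ref{sec:analysis}.

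With these identifications, Lemma~\ref{lem:sub_p} yields that the process
\[
M_n \;=\; \tfrac{1}{2}\exp\!\left\{\rho n\lambda \|\est_n - \mu\| \;-\; (\rho^2 \beta^2 \const_p(\B) + \rho K_p 2^{p-1})\, n\lambda^p(v + \|\mu - \zhat\|^p)\right\}
\]
is dominated by a nonnegative supermartingale with initial value $1$. I would then apply Ville's inequality at level $\delta_1$ to this dominating supermartingale, yielding an event $A$ with $\P(A) \geq 1 - \delta_1$ on which $M_n < 1/\delta_1$ simultaneously for every $n \geq 1$. Inverting $\tfrac{1}{2}\exp\{\cdots\} < 1/\delta_1$ to $\{\cdots\} < \log(2/\delta_1)$ and dividing through by $\rho n\lambda$ rearranges this into the time-uniform bound
\[
\|\est_n - \mu\| \;\leq\; (\rho\beta^2\const_p(\B) + K_p 2^{p-1})\, \lambda^{p-1}(v + \|\mu - \zhat\|^p) \;+\; \frac{\log(2/\delta_1)}{\rho n\lambda}, \qquad \forall\, n \geq 1.
\]

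To eliminate the unknown quantity $\|\mu - \zhat\|$, I would union-bound $A$ with the event $B = \{\|\mu - \zhat\| \leq r(\delta_2)\}$, whose complement has probability at most $\delta_2$ by hypothesis. On $A \cap B$, an event of probability at least $1 - \delta_1 - \delta_2 = 1 - \delta$, we may replace $\|\mu - \zhat\|^p$ by the deterministic upper bound $r(\delta_2)^p$ in the previous display to obtain the claimed inequality. The proof is essentially bookkeeping on top of Lemma~\ref{lem:sub_p}, and I do not anticipate any real obstacle; the only conceptual subtlety worth highlighting is that the $\calG_0$-measurability assumption on $\zhat$ is precisely what licenses treating it as a single fixed object inside the Pinelis-style martingale analysis, thereby cleanly decoupling the truncation bound from the randomness in the naive estimate.
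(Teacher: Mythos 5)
Your proposal is correct and follows essentially the same route as the paper's own proof: specialize the supermartingale bound of Lemma~\ref{lem:sub_p} to the constant sequences $\lambda_m=\lambda$, $\zhat_m=\zhat$ (so that $\estimator_n = n\lambda\,\est_n$ and $G_n = n\lambda^p(v+\|\mu-\zhat\|^p)$), apply Ville's inequality, and union-bound with the event $\{\|\mu-\zhat\|\geq r(\delta_2)\}$. Your handling of the Ville threshold (applying it to the dominating supermartingale at level $1/\delta_1$ and inverting $\tfrac12 e^{x} < 1/\delta_1$ to $x < \log(2/\delta_1)$) is, if anything, slightly cleaner than the paper's bookkeeping and yields exactly the stated bound.
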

\begin{proof}
    Let $B_1 = \{\exists n:  M_n(\lambda^n) \geq 2/\delta_1\}$ where $(M_n)$ is as in Lemma~\ref{lem:sub_p}. By Ville's inequality (Section~\ref{sec:prelims}), $\P(B_1)\leq \delta_1$. Let $B_2 = \{\|\mu - \zhat\| \geq r(\delta_2)\}$. By assumption, $\P(B_2)\leq \delta_2$. Set $B = B_1\cup B_2$ so that $\P(B)\leq \delta$. We take the sequence of predictable values $(\lambda_n)$ in Lemma~\ref{lem:sub_p} to be constant and set $\lambda_n = \lambda>0$ for all $n$. 
    On the event $B^c$ we have $\log(M_n)\leq \log(2/\delta_1)$ for all $n\geq 1$. That is, with probability $1 - \delta$, 
    \begin{equation}
    \label{eq:proof-thm-est}
        \rho\| \estimator_n - \lambda n \mu\| \leq (\rho^2 \beta^2\const_p(\B) + \rho K_p2^{p-1})G_n + \log(2/\delta_1),
    \end{equation}
    and 
    \begin{equation}
    \label{eq:proof-thm-Gn}
        G_n = n\lambda^p(v + \|\mu - \zhat\|^p) \leq n\lambda^p  (v + r(\delta_2)^p).  
    \end{equation}
    Substituting \eqref{eq:proof-thm-Gn} into \eqref{eq:proof-thm-est} and dividing both sides by $n\lambda\rho $ gives the desired result. 
\end{proof}

\begin{proof}[Proof of Theorem~\ref{thm:template}]
Given $(X_n)_{n\geq 1}$ as in the statement of Theorem~\ref{thm:template} apply Proposition~\ref{prop:template} with
$X_n' = X_{n+k}$ and $\calG_n = \calF_{n+k}$ for all $n\geq 0$. Finally, take $\rho = \beta^{-1}$. 
\end{proof}

\section{Law of the Iterated Logarithm Rates}
\label{sec:lil}

In the previous section, we derived a time-uniform, \bd{} inequality that controlled (with high probability) the deviation between a truncated mean estimator and the unknown mean. This inequality was parameterized by a scalar/truncation level $\lambda$, which, when optimized appropriately, could guarantee a width of $O(\beta v^{1/p}(\log(1/\delta)/n)^{(p - 1)/p})$ with probability at least $1 - \delta$ for a preselected sample size $n$. However, in many settings, one may not know a target sample size in advance and may wish to observe the data sequentially and stop adaptively at a data-dependent stopping time. 

To generalize our bound to this setting, we use a technique known as \emph{stitching}~\citep{howard2021time}. Stitching involves breaking ``intrinsic time'' (governed by the number of observed samples) into geometrically spaced epochs. In each epoch, we appropriately deploy and optimize Theorem~\ref{thm:template}. By carefully applying a union bound over these epochs, were obtain time-uniform rates of convergence that match the tightness of Corollary~\ref{cor:opt-lambda} up to a doubly-logarithmic multiplicative factor in the sample size.

In more detail, we break the number of observed samples into geometric buckets of the form $[2^j, 2^{j + 1})$ for $j \geq 1$. Given a failure probability $\delta \in (0, 1)$ and a ``stitching function'' $h : \R_{\geq 0} \rightarrow \R_{\geq 0}$ which satisfies $\sum_{j\geq 1}1/h(j) = 1$, we allocate a total failure probability of $\delta/h(j)$ to the bound constructed in bucket $j$. Given this failure probability, in each bucket (i.e.\ for $n \in [2^j, 2^{j + 1})$) we bound the performance of the estimator
\[
\wh{\mu}_n(j) := \frac{1}{n - k(j)}\sum_{m = k(j)}^n \left\{\Trunc(\lambda_j (X_m - \zhat_j))(X_m - \zhat_j)+ \zhat_j\right\}.
\]
Here, $k(j)$ denotes the number of samples to use for the naive mean estimate in the $j$th epoch, $\zhat_j \equiv \zhat_j(X_1, \dots, X_{k(j)})$ denotes the naive mean estimate using the first $k(j)$ samples, and $\lambda_j$ is some to-be-determined truncation level that will allow for rapid convergence. By separately bounding the deviations of the above estimators over each epoch, we can thus gauge the performance of the overall estimator
\[
\wh{\mu}_n := \sum_{j = 1}^\infty \wh{\mu}_n(j)\mathbbm{1}\{n \in [2^j, 2^{j + 1})\}.
\]
We formalize this in the following theorem.

\begin{theorem}
\label{thm:stitching}

Let $(\B, \|\cdot\|)$ satisfy Assumption~\ref{ass:smooth} and let $(X_n)_{n \geq 1}$ satisfy Assumption~\ref{ass:mean}. Let $h : \R_{\geq 0} \rightarrow \R_{\geq 0}$ be a stitching function satisfying $\sum_{n = 1}^\infty h(n)^{-1} \leq 1$ and let $k : \N \rightarrow \N$ be an arbitrary increasing function satisfying (i) $k(\lfloor \log_2(n)\rfloor) = o(n)$ and (ii) $k(\ell) \leq \ell/2$ for all $\ell \geq n_0$. Let $(\zhat_j)_{j \geq 1}$ be given by $\zhat_j := \zhat(X_1, \dots, X_{k(j)})$, and suppose there is some rate function $r : (0, 1) \times \N \rightarrow \R$ satisfying
\[
\P\left(\|\zhat_m - \mu\| \geq r(\delta, j)\right) \leq \delta
\]
for any $\delta \in (0, 1)$ and any $j \in \N$. 

Then, for any confidence parameter $\delta \in (0, 1)$, we have with probability at least $1 - \delta$,

\[
\|\wh{\mu}_n - \mu\| \leq \mathfrak{B}_p(\B)(v + r_n^p)^{1/p}\left(\beta\frac{\log(4h(\lfloor\log_2(n)\rfloor))}{n - k(\lfloor \log_2(n)\rfloor)}\right)^{(p - 1)/p}.
\]
simultaneously for all $n \geq n_0$, where $\wh{\mu}_n$ is as defined above, 
\begin{align*}
r_n &:= r\left(h(\lfloor\log_2(n)\rfloor)^{-1}\delta/2, k(\lfloor \log_2(n)\rfloor)\right) \text{~ and} \\ 
\mathfrak{B}_p(\B) &:= [2^{(p - 1)/p} + 2^{1/p}](\beta \const_p(\B) + K_p2^{p - 1})^{1/p}.
\end{align*}

\end{theorem}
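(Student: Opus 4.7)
The plan is to invoke Corollary~\ref{cor:opt-lambda} (equivalently Theorem~\ref{thm:template}) separately inside each geometric epoch $[2^j, 2^{j+1})$, spending failure budget $\delta/h(j)$ per epoch, and then combine these via a union bound. Concretely, for each $j$, I apply the corollary to the tail process $(X_m)_{m > k(j)}$ using $\zhat_j$ as the naive estimate and splitting the epoch budget as $\delta_{j,1} = \delta_{j,2} = \delta/(2h(j))$. Since $\zhat_j$ is $\calF_{k(j)}$-measurable and satisfies the assumed tail bound $\P(\|\zhat_j - \mu\| \geq r(\delta/(2h(j)), k(j))) \leq \delta/(2h(j))$, while $(X_m)_{m > k(j)}$ still has shared conditional mean $\mu$ with respect to the shifted filtration, the hypothesis of Theorem~\ref{thm:template} is satisfied. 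I choose the truncation level $\lambda_j$ according to~\eqref{eq:opt-lambda}, but tuned for a representative sample size in the epoch, namely $n = 2^{j+1}$. Theorem~\ref{thm:template} then provides, with probability at least $1 - \delta/h(j)$, a guarantee that is simultaneously valid for every $n \geq k(j)$.

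Restricting attention to the window $n \in [2^j, 2^{j+1})$ in which $\wh{\mu}_n = \wh{\mu}_n(j)$ produces a bound of the advertised form, with the prefactor $\mathfrak{B}_p(\B)$ absorbing two sources of $p$-th-root slack (described below). A union bound over epochs yields total failure probability at most $\sum_{j} \delta/h(j) \leq \delta$. On the good event, for any $n \geq n_0$ I set $j(n) = \lfloor \log_2 n\rfloor$, read off the epoch-$j(n)$ bound, and identify $r_n = r(\delta/(2h(j(n))), k(j(n)))$, absorbing the (constant-in-$n$) $\log(1/\delta)$ factor into the $\log(4h(j(n)))$ term to match the stated form.

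The main technical task is verifying the constant $\mathfrak{B}_p(\B) = (2^{(p-1)/p} + 2^{1/p})(\beta\const_p(\B) + K_p 2^{p-1})^{1/p}$. The $2^{1/p}$ piece is exactly the factor of $2$ in Corollary~\ref{cor:opt-lambda} (raised to the appropriate power), and the extra $2^{(p-1)/p}$ piece comes from the mismatch between the optimal $\lambda$ for the current $n$ and the epoch-fixed $\lambda_j$. Specifically, the first term $\lambda_j^{p-1}(\beta\const_p(\B) + K_p 2^{p-1})(v + r^p)$ in~\eqref{eq:template-guarantee} is $n$-independent and already balanced for $n = 2^{j+1}$, whereas the second term $\beta\log(\cdot)/(\lambda_j (n - k(j)))$ inflates by at most a factor of $2$ when $n$ is as small as $2^j$ (using that $k(\ell) \leq \ell/2$ and $k(\ell) = o(2^\ell)$ force $n - k(j) \geq 2^{j+1}/4$ for $j \geq n_0$, so $n - k(j)$ is comparable with $2^{j+1} - k(j)$ up to a universal factor). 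Summing the two terms and taking $(p-1)/p$-powers produces the advertised prefactor; this is the standard stitching bookkeeping from~\citet{howard2021time}, and I expect it to be the only genuine obstacle — the probabilistic content is entirely handled by Theorem~\ref{thm:template} and the union bound, while everything else reduces to algebra with the optimized $\lambda_j$.
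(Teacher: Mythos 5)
Your overall strategy is the paper's own: apply Theorem~\ref{thm:template} once per geometric epoch $[2^j,2^{j+1})$ with budget $\delta/h(j)$ split evenly between the supermartingale event and the naive-estimate event, fix one truncation level $\lambda_j$ per epoch, take a union bound over epochs, and finish with endpoint bookkeeping using $2^{j-1}\le n-k(j)\le 2^{j+1}$. The probabilistic content is identical; the only real work, as you note, is the constant.

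That is where there is a genuine (though fixable) gap: tuning $\lambda_j$ for the right endpoint $2^{j+1}$ does not yield the stated $\mathfrak{B}_p(\B)$. Writing each term of \eqref{eq:template-guarantee} relative to the common factor $\const_p(j)^{1/p}\bigl(\beta\log(4/\delta_j)/(n-k(j))\bigr)^{(p-1)/p}$, your choice makes the $\lambda_j^{p-1}$ term carry the ratio $\bigl((n-k(j))/2^{j+1}\bigr)^{(p-1)/p}\le 1$ while the $1/\lambda_j$ term carries $\bigl(2^{j+1}/(n-k(j))\bigr)^{1/p}\le 4^{1/p}$, since the best available lower bound is $n-k(j)\ge 2^{j-1}$. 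Your bracket is therefore $1+4^{1/p}$, which strictly exceeds $2^{(p-1)/p}+2^{1/p}$ for every $p\in(1,2]$ (e.g.\ $3$ versus $2\sqrt{2}$ at $p=2$), so the theorem's constant is not recovered. The paper instead puts $2^j$ in the denominator of $\lambda_j$, so the two ratios become $\bigl((n-k(j))/2^{j}\bigr)^{(p-1)/p}\le 2^{(p-1)/p}$ and $\bigl(2^{j}/(n-k(j))\bigr)^{1/p}\le 2^{1/p}$, giving exactly $\mathfrak{B}_p(\B)$. Relatedly, your attribution of the factors is off: the $2^{1/p}$ is not ``the factor of $2$ from Corollary~\ref{cor:opt-lambda} raised to a power''---that $2$ is just the balanced sum of two equal terms at the exact optimum---and in the stitched bound \emph{both} summands arise from the endpoint mismatch, one per term; your claim that the second term inflates by at most a factor of $2$ is also inconsistent with your own bound $n-k(j)\ge 2^{j+1}/4$. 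Re-tuning $\lambda_j$ at $2^j$ repairs all of this with no other change; nothing conceptual is missing. (Your folding of the $\log(1/\delta)$ factor into $\log(4h(\cdot))$ mirrors what the paper does and is immaterial.)
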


Before proceeding with the proof of Theorem~\ref{thm:stitching}, we show that if the naive estimate converges in the large sample limit and if sufficiently many samples are used to construct the naive estimate, then we get the same rate obtained in Corollary~\ref{cor:opt-lambda} up to a doubly-logarithmic factor in the sample size. The proof of the following is immediate upon noting that $(1 + \lfloor\log_2(n)\rfloor)^s = O(\log_2(n))$ for any $s > 1$.

\begin{corollary}
\label{cor:LIL_rate}
In addition to the assumptions of Theorem~\ref{thm:stitching}, if we further suppose that 
\[
r_n := r\Big(h(\lfloor\log_2(n)\rfloor)^{-1}\delta, k(\lfloor \log_2(n)\rfloor)\Big) = o(1)
\]
and that $h$ is taken as $h(x) := (x + 1)^s\zeta(s)$ for some $s > 1$, then
\[
\|\wh{\mu}_n - \mu\| = O\left(\beta v^{1/p} \left(\frac{\log(\log(n)/\delta)}{n}\right)^{(p - 1)/p}\right).
\]
\end{corollary}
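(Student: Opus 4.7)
The plan is to carry out a direct asymptotic simplification of the bound from Theorem~\ref{thm:stitching} using the two additional hypotheses; no new probabilistic content is required. Three quantities need to be tracked: the logarithmic numerator $\log(4h(\lfloor\log_2(n)\rfloor)/\delta)$, the effective sample size $n - k(\lfloor\log_2(n)\rfloor)$, and the pre-factor $\mathfrak{B}_p(\B)(v+r_n^p)^{1/p}$.

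First, I would plug in the specific stitching function $h(x) = (x+1)^s\zeta(s)$ and apply the hint given just before the statement: since $(1+\lfloor\log_2(n)\rfloor)^s$ is polynomial in $\log n$,
\[
\log\!\bigl(4h(\lfloor\log_2(n)\rfloor)/\delta\bigr) \;=\; s\log\!\bigl(1+\lfloor\log_2(n)\rfloor\bigr) + \log(4\zeta(s)/\delta) \;=\; O\!\bigl(\log\log n + \log(1/\delta)\bigr),
\]
which is $O(\log(\log(n)/\delta))$ and accounts for the iterated logarithm factor in the corollary's conclusion. The point is that the geometric stitching inflates the per-epoch confidence budget by at most a polylogarithmic factor in $n$.

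Next, I would use the structural hypotheses. Assumption (i) on $k$ in Theorem~\ref{thm:stitching} gives $k(\lfloor\log_2(n)\rfloor) = o(n)$, hence $n - k(\lfloor\log_2(n)\rfloor) = n(1 - o(1)) = \Theta(n)$. The new hypothesis $r_n = o(1)$ gives $(v + r_n^p)^{1/p} = v^{1/p}(1 + o(1)) = O(v^{1/p})$. It remains to unpack the pre-factor $\mathfrak{B}_p(\B) = [2^{(p-1)/p} + 2^{1/p}](\beta\,\const_p(\B) + K_p 2^{p-1})^{1/p}$: since the discussion after~\eqref{eq:absolute_constant} shows $\const_p(\B)$ is bounded by an absolute constant and $\beta \geq 1$, this gives $\mathfrak{B}_p(\B) = O(\beta^{1/p})$.

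Assembling the three pieces into the bound of Theorem~\ref{thm:stitching} and using $\beta^{1/p}\cdot\beta^{(p-1)/p} = \beta$ yields
\[
\|\wh{\mu}_n - \mu\| \;=\; O\!\bigl(\beta^{1/p} v^{1/p}\bigr) \cdot \left(\frac{\beta \cdot O(\log(\log(n)/\delta))}{\Theta(n)}\right)^{\!(p-1)/p} \;=\; O\!\left(\beta v^{1/p}\left(\frac{\log(\log(n)/\delta)}{n}\right)^{\!(p-1)/p}\right),
\]
which is exactly the claim. There is no genuine obstacle here: the whole argument is bookkeeping on top of Theorem~\ref{thm:stitching}. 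The only spots worth double-checking are the $\beta$-accounting (the $\beta^{1/p}$ from $\mathfrak{B}_p(\B)$ combined with the $\beta^{(p-1)/p}$ inside the bracket must multiply to exactly $\beta$) and the merging of $\log\log n$ and $\log(1/\delta)$ into the single $\log(\log(n)/\delta)$ factor, which is justified by $\log\log n + \log(1/\delta) = \log(\log(n)/\delta) + O(1)$.
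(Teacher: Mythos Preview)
Your proposal is correct and follows the same route as the paper: the paper's entire proof is the one-line remark preceding the corollary, namely that the result ``is immediate upon noting that $(1+\lfloor\log_2(n)\rfloor)^s = O(\log_2(n))$,'' i.e.\ a direct asymptotic simplification of the Theorem~\ref{thm:stitching} bound. Your write-up is in fact more careful than the paper's---you track the $\beta$ contribution from $\mathfrak{B}_p(\B)$ explicitly and you restore the missing $/\delta$ inside the logarithm (which is dropped in the displayed bound of Theorem~\ref{thm:stitching} but present in its proof)---but the approach is identical.
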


For instance, if we take $h$ as defined above simply set $k(j) = j$ and we take $\zhat_j := j^{-1}(X_1 + \cdots + X_j)$ and obtain $r(\delta, k)$ using Lemma~\ref{lem:mean_concentration}, then it is clear that we have 
\[
r_n \lesssim \frac{\beta \log(n)^{1/p}v^{1/p}}{\delta \log(n)^{(p - 1)/p}}  = \frac{\beta v^{1/p}}{\delta \log(n)^{p - 1}} \xrightarrow[n \rightarrow \infty]{} 0.
\]
Likewise, similar convergence guarantees are provided if $\zhat_j$ is taken to be, say, a geometric median of medians estimator. We now prove Theorem~\ref{thm:stitching}.

\begin{proof}
Set $\delta_j := \delta/h(j)$ and set 
\[\const_p(j) := \left(\beta\const_p(\B) + 2^{p - 1}K_p\right)\left(v + r(\delta_j/2, k(j)\right),\] for convenience. For any $j \geq 1$, defined the tuning parameter $\lambda_j$ as
\[
\lambda_j := \left(\frac{\beta\log(4/\delta_j)}{2^j\const_p(j)}\right)^{1/p}.
\]

For $n \geq n_0$, $j \geq 1$, define the confidence bounds 
\[
W(n, j) := \lambda_j^{p - 1}\const_p(j) + \frac{\beta\log(4/\delta_j)}{\lambda_j (n - k(j))}
\]

If we define the ``bad'' event $B := \left\{\exists n \geq n_0 : \|\wh{\mu}_n - \mu\| \geq W(n, j)\right\}$, it is clear from applying Theorem~\ref{thm:template} that we have
\begin{align*}
\P(B) & \leq \sum_{j\geq 1}   \P(\exists n \in [2^j, 2^{j+1}): \| \est_n(j) - \mu\| \geq W(n,j)) \\
&\leq \sum_{j\geq 1}\delta_j  = \sum_{j \geq 1}h(j)^{-1}\delta \leq \delta.
\end{align*}

We now want to show the target time-uniform bound on the sequence of estimators $(\wh{\mu}_n)_{n \geq n_0}$. Operating on the ``good'' event $B^c$, letting $n \geq n_0$ be arbitrary, and letting $j$ denote the index satisfying $n \in [2^j, 2^{j + 1})$, we have
\begin{align*}
\|\wh{\mu}_n - \mu\|  &= \|\wh{\mu}_n(j) - \mu\| \\
&\leq W(n, j) = \lambda_j^{p - 1}\const_p(j) + \frac{\log(4/\delta_j)}{\lambda_j (n - k(j))} \\
&= \const_p(j)^{1/p}\left(\beta\log(4/\delta_j)\right)^{(p - 1)/p}\left[\left(\frac{1}{2^{j}}\right)^{(p - 1)/p} + \left(\frac{2^{j}}{n - k(j)}\right)^{1/p}\right] \\
&= \const_p(j)^{1/p}\left(\frac{\beta\log(4/\delta_j)}{n - k(j)}\right)^{(p - 1)/p}\left[\left(\frac{n - k(j)}{2^{j}}\right)^{(p - 1)/p} + \left(\frac{2^j}{n - k(j)}\right)^{1/p}\right] \\
&\leq \const_p(j)^{1/p}\left(\frac{\beta\log(4/\delta_j)}{n - k(j)}\right)^{(p - 1)/p}\left[2^{(p - 1)/p} + 2^{1/p}\right],
\end{align*}
where the last inequality follows from the fact that $2^{j - 1} \leq n - k(j) \leq 2^{j + 1}$, since $n\geq n_0$ implies $k(j) \leq n/2$.

Lastly, recalling that $\delta_j = h(j)^{-1}\delta$ and that, by definition, $j = \lfloor \log_2(n)\rfloor$, we have
\[
\log(4/\delta_j) \leq \log(4h(\lfloor \log_2(n)\rfloor)).
\]
Plugging everything together and expanding the definition of $\const_p(j)$ yields that, with probability at least $1 - \delta$, simultaneously for all $n \geq n_0$
\begin{align*}
\|\wh{\mu}_n - \mu\| &\leq [2^{(p - 1)/p} + 2^{1/p}]\left((\beta \const_p(\B) + K_p2^{p - 1})(v + r_n^p)\right)^{1/p}\left(\frac{\beta\log(4h(\lfloor\log_2(n)\rfloor))}{n - k(j)}\right)^{(p - 1)/p} \\
&= \mathfrak{B}_p(\B) \left(\frac{\beta\log(4h(\lfloor\log_2(n)\rfloor))}{n - k(\lfloor \log_2(n)\rfloor)}\right)^{(p - 1)/p},
\end{align*}
thus proving the claimed result.
\end{proof}

\section{Bound Comparison and Simulations}
\label{sec:experiments}
In the above sections, we argued that the truncated mean estimator, when appropriately optimized, could obtain a distance from the true mean of $O\big(v^{1/p}\left(\log(1/\delta)/n\right)^{(p - 1)/p}\big)$ with high probability. In particular, this rate matched that of the geometric median-of-means estimator due to \citet{minsker2015geometric}. In this section, we study the empirical instead of theoretical performance of our bounds and estimator.

\paragraph{Comparing Tightness of Bounds}

In Figure~\ref{fig:bound_comp}, we compare the confidence bounds obtained for our truncation-based estimators optimized for a fix sample size (Corollary~\ref{cor:empirical_mean}) against other bounds in the literature. Namely, we compare against geometric median-of-means~\citep{minsker2015geometric}, the sample mean, and (in the case a shared covariance matrix exists for observations) the tournament median-of-means estimator~\citep{lugosi2019sub}. We plot the natural logarithm of the bounds against the logarithm base ten of the sample sizes $n$ for $n \in [10^2, 10^{10}]$ and for $p \in \{1.25, 1.5, 1.75, 2.0\}$. We assume $\delta = 10^{-4}$ and $v = 1$. For truncation-based estimates, we assume $k = \lfloor n/10\rfloor$ samples are used to produce the initial mean estimate and the remaining $n - k$ are used for the final mean estimate. We plot the resulting bounds for when the initial mean estimate is either computed using the sample mean or geometric median-of-means.
For the tournament median-of-means estimate, we assume observations take their values in $\R^d$ for $d = 100$, and that the corresponding covariance matrix is the identity $\Sigma = I_d/d$.

As expected, all bounds have a slope of $-(p - 1)/p$ when $n$ is large, indicating equivalent dependence on the sample size. For all values of $p$, the truncation-based estimator using geometric median-of-means as an initial estimate obtains the tightest rate once moderate sample sizes are reached ($n = 10^4$ or $n = 10^5$). When $p \in \{1.25, 1.5\}$, much larger sample sizes are needed for truncation-based estimates with a sample mean initial estimate to outperform geometric median means (needing $\geq 10^{10}$ samples for $p = 1.25$).
For $p=2.0$ (i.e., finite variance) the tournament median-of-means estimate, despite achieving optimal sub-Gaussian dependence on $\lambda_{\max}(\Sigma)$ and $\Tr(\Sigma) = v$, performs worse than even the naive mean estimate. This is due to prohibitively large constants. 
These plots suggest that the truncation-based estimate is a practical and computationally efficient alternative to approaches based on median-of-means.

\begin{figure}
    \centering
    \begin{subfigure}{0.4\textwidth}
    \centering
    \includegraphics[width=\linewidth]{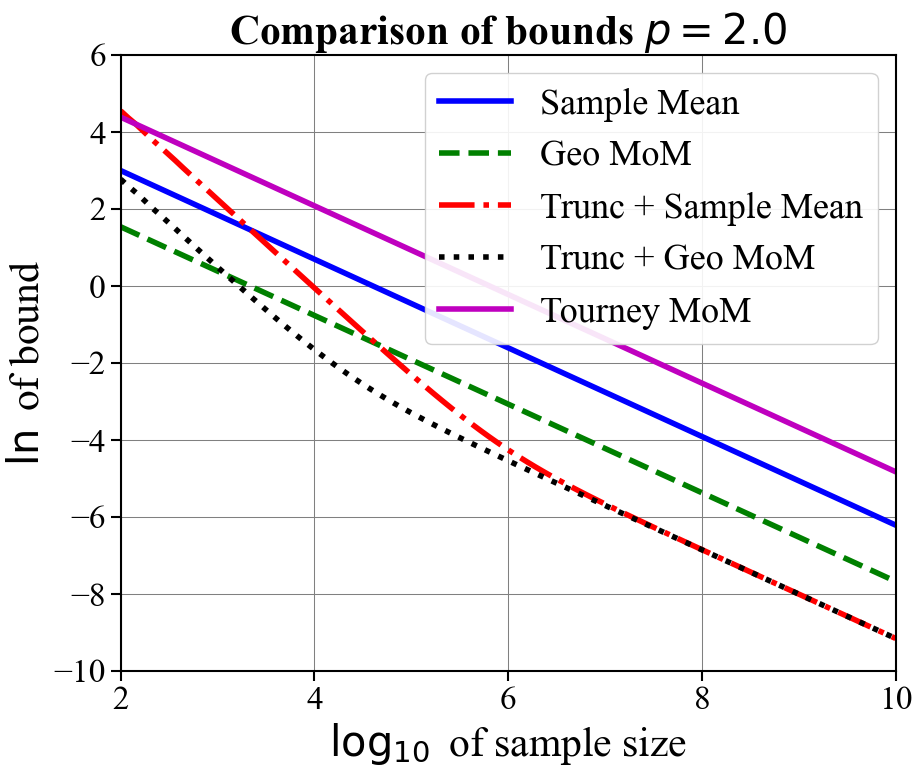}
    \caption{$p = 2.0$}
    \label{fig:bound_comp:p=2.0}
    \end{subfigure}
    \begin{subfigure}{0.4\textwidth}
    \centering
    \includegraphics[width=\linewidth]{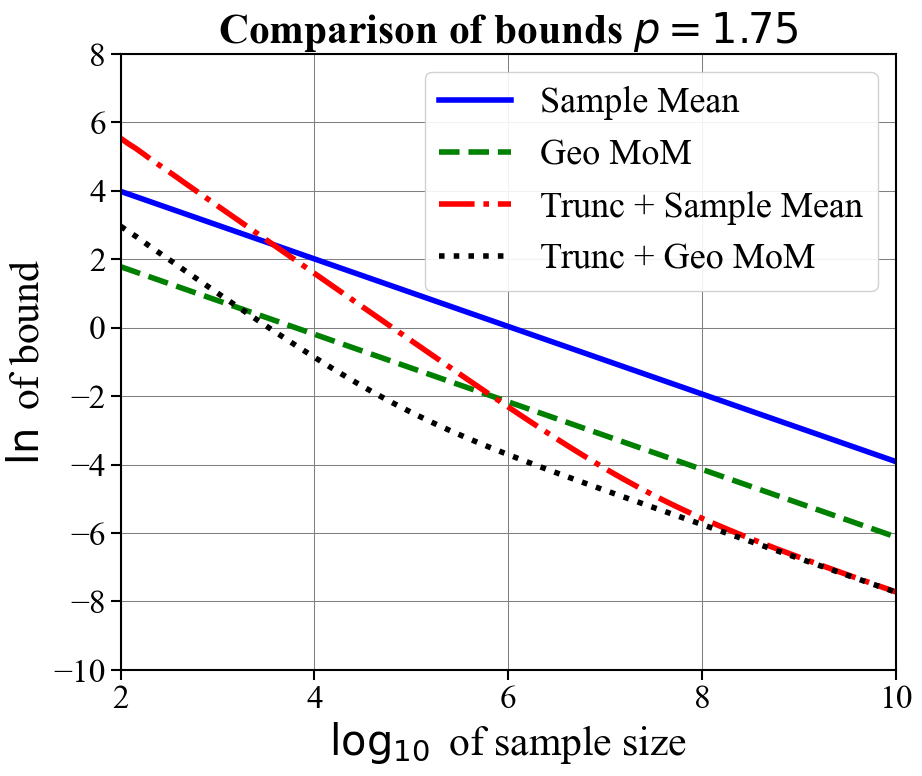}
    \caption{$p=1.75$}
    \label{fig:bound_comp:p=1.75}
    \end{subfigure}
    \begin{subfigure}{0.4\textwidth}
        \centering
    \includegraphics[width=\linewidth]{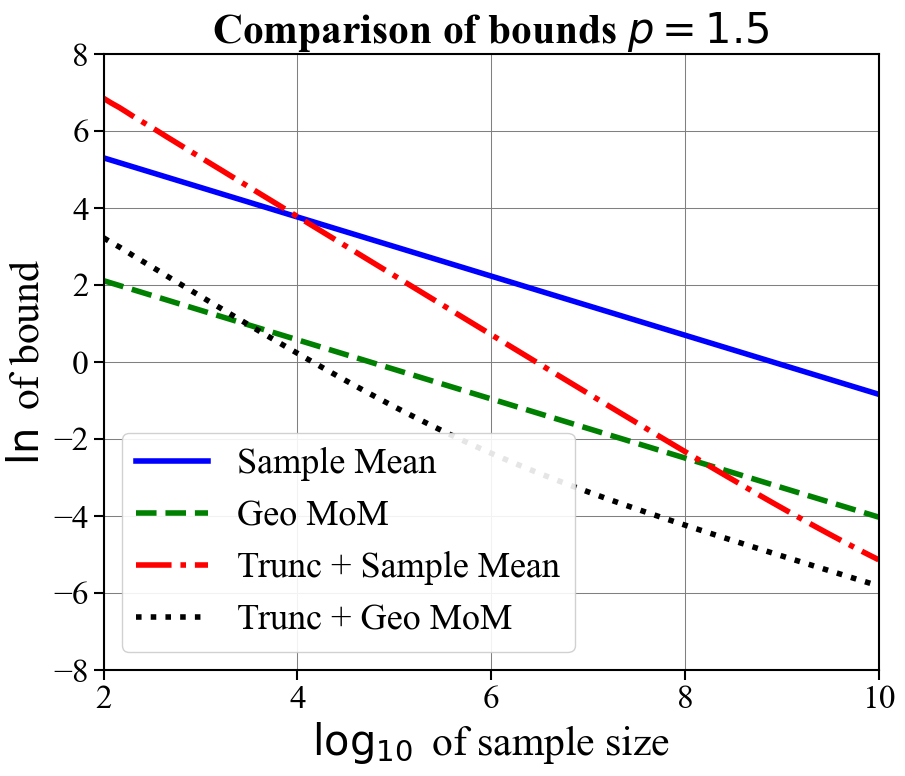}
    \caption{$p=1.5$}
    \label{fig:bound_comp:p=1.5}
    \end{subfigure}
        \begin{subfigure}{0.4\textwidth}
        \centering
    \includegraphics[width=\linewidth]{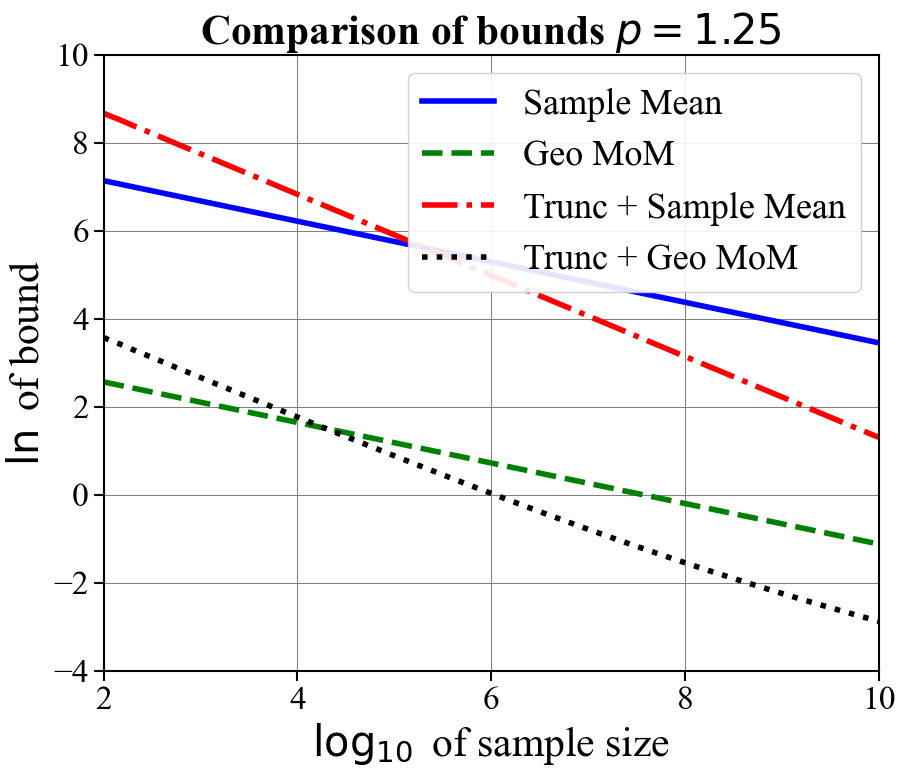}
    \caption{$p=1.25$}
    \label{fig:bound_comp:p=1.25}
    \end{subfigure}
    \caption{For $p \in \{1.25, 1.5, 1.75\}$, we plot the tightness of optimized bounds associated with the sample mean, geometric median-of-means (Geo-MoM), truncation with initial sample mean estimate, and truncation with initial Geo-MoM estimate. We assume $n \in [10^2, 10^{10}]$, $v = 1.0$, $\delta=10^{-4}$, and $k = n/10$. In the case $p=2.0$, we assume a shared covariance matrix $\Sigma$ exists so we can plot the tournament median-of-means bounds assuming $\lambda_{\max}(\Sigma) = v/d$ and $d = 100$.  }
    \label{fig:bound_comp}
\end{figure}

\paragraph{Performance of Estimators on Simulated Data}

In Figure~\ref{fig:est_dist}, we examine the  performance of the various mean estimators by plotting the distance between the estimates and the true mean. To do this, we sample $n = 100,000$ i.i.d.\ samples $X_1, \dots, X_n \in \R^d$ for $d = 10$ in the following way. First, we sample i.i.d.\ directions $U_1, \dots, U_n \sim \mathrm{Unif}(\S^{d - 1})$ from the unit sphere. Then, we sample i.i.d.\ magnitudes $Y_1, \dots, Y_n \sim \mathrm{Pareto}(a)$ from the Pareto II (or Lomax) distribution with $a= 1.75$.\footnote{If $Y \sim \mathrm{Pareto}(a)$, the $Y$ has inverse polynomial density $\propto (1 + x)^{-a}$.} The learner then observes $X_1 = Y_1 \cdot U_1, \dots, X_n = Y_n \cdot U_n$, and constructs either a geometric median estimate, a sample mean estimate, or a truncated mean estimate. 

To compute the number of folds for geometric median-of-means, we follow the parameter settings outlined in \citet{minsker2015geometric} and assume a failure probability of $\delta = 10^{-4}$ (although we are not constructing confidence intervals, the failure probability guides how to optimize the estimator). See Appendix~\ref{sec:gmom-banach} for further discussion on this estimator. Once again, we consider the truncated mean estimator centered at both the sample mean and a geometric median-of-means estimate. We always use $k = \lfloor \sqrt{n}\rfloor$ samples to construct the initial estimate, and produce a plot for hyperparameter $\lambda \in [0.0005, 0.005, 0.05, 0.5]$.

\begin{figure}
    \centering
    \begin{subfigure}{0.4\textwidth}
    \centering
    \includegraphics[width=\linewidth]{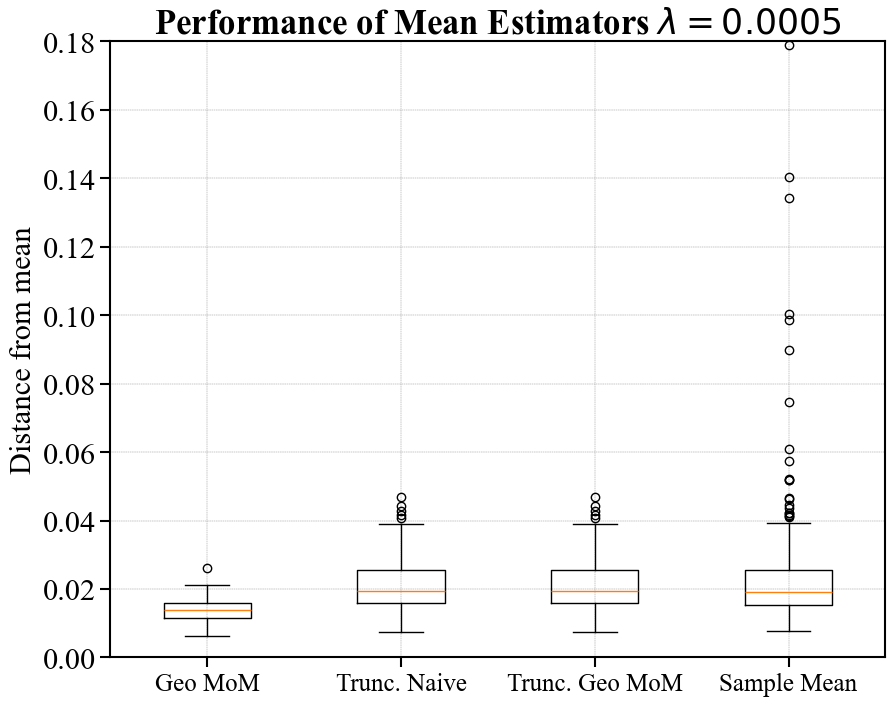}
    \caption{$\lambda = 0.0005$}
    \label{fig:est_dist:lam=5e-4}
    \end{subfigure}
    \begin{subfigure}{0.4\textwidth}
    \centering
    \includegraphics[width=\linewidth]{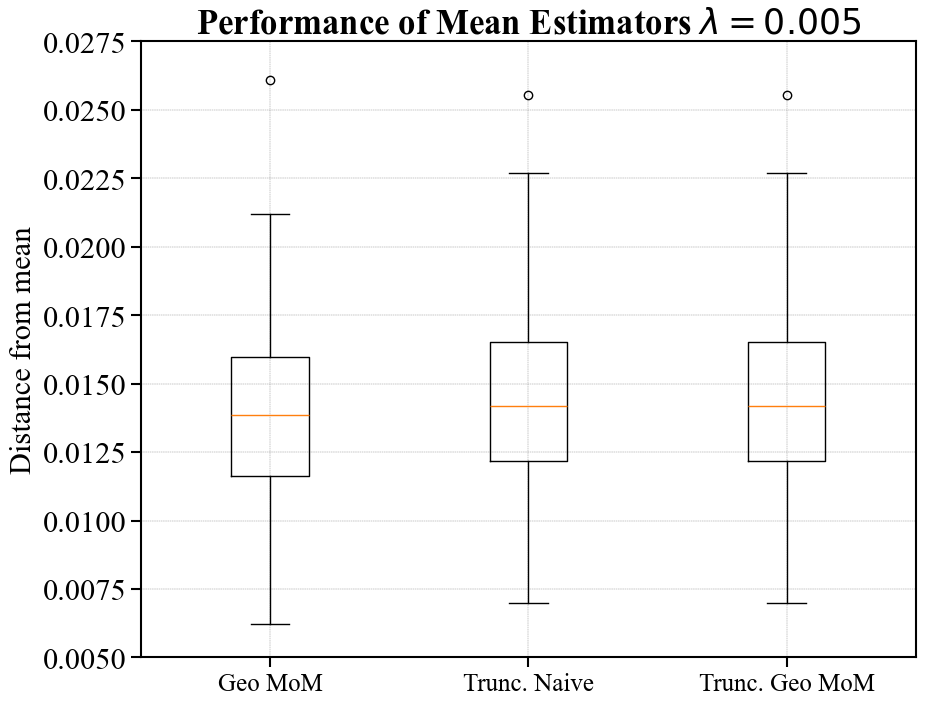}
    \caption{$\lambda = 0.005$}
    \label{fig:est_dist:lam=5e-3}
    \end{subfigure}
    \begin{subfigure}{0.4\textwidth}
        \centering
    \includegraphics[width=\linewidth]{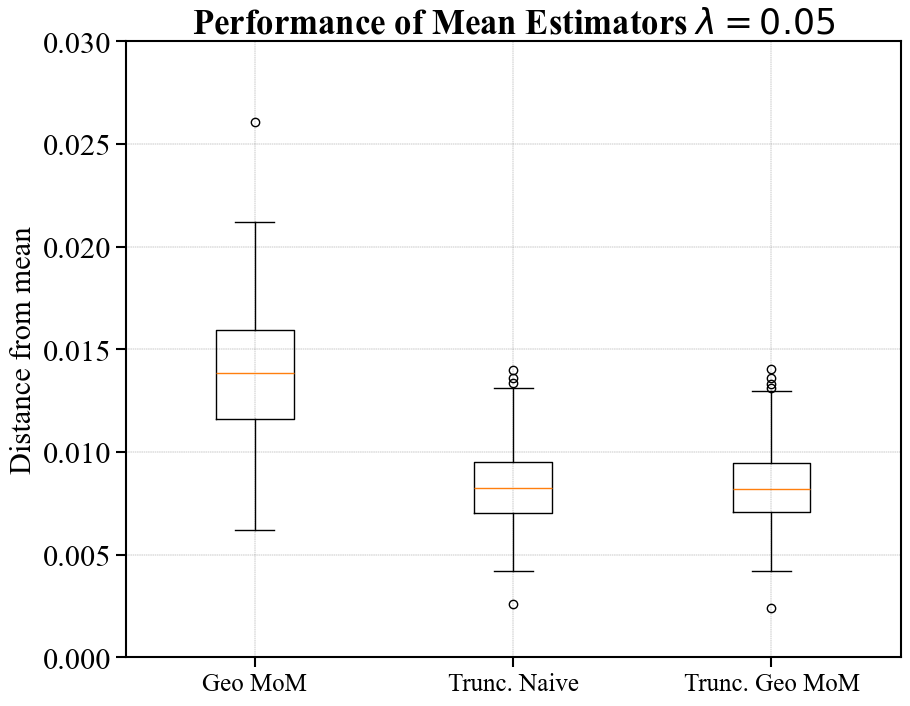}
    \caption{$\lambda = 0.05$}
    \label{fig:est_dist:lam=5e-2}
    \end{subfigure}
        \begin{subfigure}{0.4\textwidth}
        \centering
    \includegraphics[width=\linewidth]{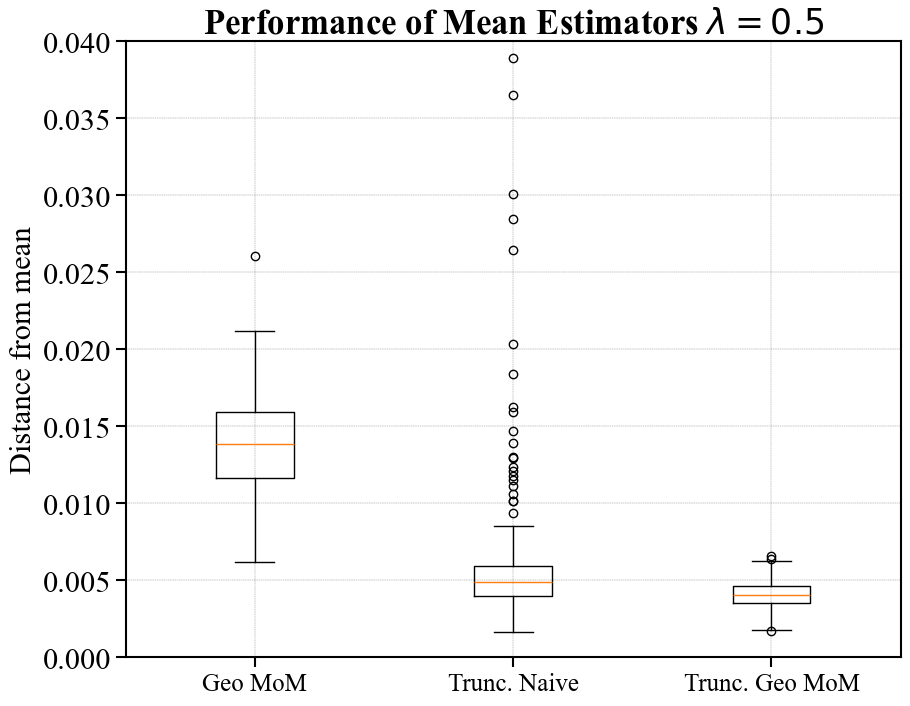}
    \caption{$\lambda = 0.5$}
    \label{fig:est_dist:lam=5e-1}
    \end{subfigure}
    \caption{We compare the empirical distributions of distance between the mean estimate and the true mean for a variety of estimators. We generate $n =10^6$ i.i.d.\ samples in $\R^{10}$ as outlined above, and use $k = \lfloor\sqrt{10^6}\rfloor$ samples to construct initial mean estimates. We compute these estimates of 250 runs. For truncation-based estimates, we consider $\lambda \in [0.0005, 0.005, 0.05, 0.5]$. We only include the sample mean in the first plot for readability. }
    \label{fig:est_dist}
\end{figure}

We construct these estimators over 250 independent runs and then construct box and whisker plots summarizing the empirical distance between the estimators and the true mean. The boxes have as a lower bound the first quartile $Q1$, in the middle the sample median $M$, and at the top the third quartile $Q3$. The whiskers of the plot are given by the largest and smallest point falling within $M \pm 1.5\times(Q3 - Q2)$, respectively. All other points are displayed as outliers. We only include the sample mean in the first plot as to not compress the empirical distributions associated with other estimates.

As expected, the sample mean suffers heavily from outliers. For $\lambda \in \{0.0005, 0.005\}$ (corresponding to truncation at large radii), the geometric median-of-means estimate is roughly two times closer to the mean than either truncation-based estimate. In the setting of aggressive truncation ($\lambda \in \{0.05, 0.5\}$), the truncated mean estimator centered at the geometric median-of-means initial estimate offers a significantly smaller distance to the true mean than just geometric median-of-means alone. The truncated estimate centered at the sample mean performs similarly for $\lambda = 0.05$, but suffers heavily from outliers when $\lambda = 0.5$. Interestingly, the recommended truncation level for optimizing tightness at $n = 100,000$ samples is $\lambda \approx 0.0004$ per Corollary~\ref{cor:opt-lambda}. Our experiments reflect that one may want to truncate more aggressively than is recommended in the corollary.  In practice, one could likely choose an appropriate truncation level through cross-validation.

\section{Summary and Conclusion}
In this work, we presented a novel analysis of a simple truncation/threshold-based estimator of a heavy-tailed mean in smooth Banach spaces, strengthening the guarantees on such estimators that currently exist in the literature. In particular, we allow for martingale dependence between observations, replace the assumption of finite variance with a finite $p$-th moment for $1< p\leq 2$, and let the centered $p$-th moment be bounded instead of the raw $p$-th moment (thus making the estimator translation invariant). Our bounds are also time-uniform, meaning they hold simultaneously for all sample sizes. We provide both a \bd{} inequality that can be optimized for a particular sample size (but remains valid at all times), and a bound whose width shrinks to zero at an iterated logarithm rate. Experimentally, our estimator performs quite well compared to more computationally intensive methods such as geometric median-of-means, making it an appealing choice for practical problems. 

There are several important open directions related to the work in this paper. We note that to ensure our estimator obtained a rate of $O\left(\beta v^{1/p}(\log(\delta^{-1})/n)^{(p - 1)/p}\right)$, we needed to select the truncation level $\lambda$ to explicitly depend on the moment bound $v$. This is in contrast to point estimators such as geometric median of means, whose parameter need not make explicit use of moment bounds to obtain the same rate. Thus, it would be interesting to see if one could develop point estimator analogue of the estimator considered in this paper, or perhaps an estimator that could automatically adapt to an unknown $p$th moment bound $v$.

Further, in the setting of finite-dimensional Euclidean space where $p = 2$ and the data are i.i.d.\ with shared covariance matrix $\Sigma$, estimators have been constructed (see \citet{lugosi2019sub, lugosi2019mean}) obtaining a high-probability rate of 
\[
\|\wh{\mu}_n - \mu\| \lesssim \sqrt{\frac{\Tr(\Sigma)}{n}} + \sqrt{\frac{\lambda_{\max}(\Sigma)\log(1/\delta)}{n}}.
\]
Noting that $\Tr(\Sigma) = \E\|X_n\|^2$ and $\lambda_{\max}(\Sigma) = \sup_{\nu \in \S^{d - 1}} |\langle \nu, X_n\rangle|^2$, we believe that the natural equivalent of the trace and maximal eigenvalue of the covariance matrix in settings where $p < 2$ and $(\B, \|\cdot\|)$ is a Banach space would be $\E_{n - 1}\|X_n\|^p$ and $\sup_{\nu \in \S^\ast}\E_{n - 1}|\langle \nu, X_n\rangle|^p$, where $\S^\ast$ denotes the unit ball in the dual space $(\B^\ast, \|\cdot\|_\ast)$. Surprisingly, it seems as if no existing works (including the present one and \citet{minsker2015geometric}) have obtained a ``separation of rates'' as has been accomplished in the setting $p = 2$. Perhaps it is possible to obtain a rate of the form
\[
\|\wh{\mu}_n - \mu\| \lesssim \beta v^{1/p}\left(\frac{1}{n}\right)^{(p - 1)/p} + \beta w^{1/p}\left(\frac{\log(1/\delta)}{n}\right)^{(p - 1)/p}
\]
for some appropriately defined estimator $\wh{\mu}_n$ of $\mu$. We leave obtaining such a bound as interesting future work.

\subsection*{Acknowledgments}

DMT acknowledges the support of a fellowship from `la Caixa' Foundation (ID 100010434), code number LCF/BQ/EU22/11930075.
BC and AR acknowledge support from NSF grant IIS-2229881. 
BC was supported in part by the NSERC PGS D program, grant no. 567944.

\bibliography{bib.bib}{}
\bibliographystyle{plainnat}

\appendix 
\section{Omitted Proofs}
\label{sec:proofs}

\begin{definition}
\label{def:decouple}

Let $(X_n)_{n \geq 1}$ be a sequence of $\B$-valued random variables on a probability space $(\Omega, \calF, \P)$ and let $(\calF_n)_{n \geq 0}$ denote its natural filtration. We say $(X_n')_{n \geq 1}$ is a decoupled tangent process for $(X_n)_{n \geq 1}$ if the following hold.
\begin{enumerate}
    \item $X_n =_d X_n' \mid \calF_{n - 1}$ for all $n \geq 1$, and
    \item For any $n \geq 1$, $X_1', \dots X_n'$ are conditionally independent given $\calF_\infty := \bigcup_{n = 0}^\infty \calF_n$.
\end{enumerate}

\end{definition}



\begin{prop}[Marcinkiewicz-Zygmund/von Bahr-Esseen in Smooth Banach Spaces]
\label{prop:marcin}
Let $(X_n)_{n \geq 1}$ be random variables in a $\beta$-smooth Banach space such that $\E_{n - 1} X_n = 0$ and $\E\|X_n \|^p < \infty$ for all $n \geq 0$ and some $p \in (1, 2]$. Letting $(S_n)_{n \geq 0}$ be $S_n := X_1 + \cdots + X_n$, it follows that:
\begin{enumerate}
\item If the $(X_n)_{n \geq 1}$ are independent, we have
\[
\E\|S_n\|^p \leq 2^p\beta^p\E\left[\left(\sum_{m = 1}^n \|X_m\|^2\right)^{p/2}\right] \leq  2^p \beta^p \sum_{m=1}^n \E\|X_m\|^p.
\]
\item Otherwise, 
\[
\E\|S_n\|^p \leq 2^p\beta^p \sum_{m  = 1}^n \E\|X_m\|^p.
\]
\end{enumerate}
\end{prop}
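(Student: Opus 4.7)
The two parts of the result correspond to different proof techniques: part (1) is essentially the $L^p$ Burkholder-Davis-Gundy inequality for martingales in $(2,\beta)$-smooth Banach spaces, while part (2) will follow from (1) by decoupling using the tangent sequence construction of Definition~\ref{def:decouple}. I plan to handle the two parts in sequence.

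For part (1), since the $X_m$ are independent with zero mean, $(S_n)$ is a martingale with respect to its natural filtration. A direct induction using the smoothness inequality~\eqref{eq:2beta-smooth} (which controls $\E[\|S_{n-1}+X_n\|^2\mid \calF_{n-1}]$ in terms of $\|S_{n-1}\|^2$ and $\E[\|X_n\|^2\mid\calF_{n-1}]$ once the cross term vanishes by mean-zero) yields the $L^2$ bound $\E\|S_n\|^2 \leq \beta^2 \sum_m \E\|X_m\|^2$. To obtain the sharper first inequality in (1) for general $p\in(1,2]$, I would invoke the Pisier-Pinelis extension of BDG to $2$-smooth Banach spaces: this gives $\E\|S_n\|^p \leq 2^p \beta^p \E[(\sum_m \|X_m\|^2)^{p/2}]$, and can be derived from the $L^2$ bound via a stopping-time / good-$\lambda$ argument applied to the quadratic-variation process $(\sum_{m\leq n} \|X_m\|^2)^{1/2}$. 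The second inequality in (1) is then immediate: for $p \in (1,2]$, $t \mapsto t^{p/2}$ is concave and vanishes at $0$, hence subadditive on $[0,\infty)$, so $(\sum_m \|X_m\|^2)^{p/2} \leq \sum_m \|X_m\|^p$ pointwise, and taking expectations gives the bound.

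For part (2), the plan is to reduce to part (1) by working with a decoupled tangent sequence $(X_n')$ for $(X_n)$. The decoupling property in Definition~\ref{def:decouple}(2) guarantees that, conditional on $\calF_\infty$, the variables $X_1',\dots,X_n'$ are independent, and combining the tangent property with $\E_{m-1}X_m=0$ we also obtain $\E[X_m'\mid\calF_\infty] = \E[X_m'\mid\calF_{m-1}]= \E[X_m \mid \calF_{m-1}] = 0$. Thus part (1) applies conditionally on $\calF_\infty$ to the sum $S_n' := \sum_{m\leq n} X_m'$, yielding
\[\E[\|S_n'\|^p \mid \calF_\infty] \leq 2^p \beta^p \sum_m \E[\|X_m'\|^p \mid \calF_\infty].\]
Taking outer expectations and using $\E\|X_m'\|^p = \E\|X_m\|^p$ (from the tangent property) gives the stated right-hand side, but for $\|S_n'\|^p$ in place of $\|S_n\|^p$. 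The final step is to transfer the control from $S_n'$ to $S_n$ via a decoupling comparison of the form $\E\|S_n\|^p \leq \E\|S_n'\|^p$ (up to an absolute constant that is absorbed into the leading factor), which for tangent sequences holds essentially by a conditional symmetrization argument.

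The main obstacle will be making the decoupling step in part (2) precise: general Banach spaces do not automatically enjoy UMD-type decoupling with absolute constants, so some care is needed to select a decoupling inequality appropriate to the $(2,\beta)$-smooth setting and to ensure the constants combine to give the claimed prefactor $2^p\beta^p$. A clean variant is to invoke the Kwapień-Woyczyński / de la Peña style decoupling for tangent martingale difference sequences, which holds in arbitrary Banach spaces and costs only a universal factor that can be balanced against the BDG constant from (1). The rest of the argument is bookkeeping, and closely mirrors the proof strategy already used in the paper's main concentration results.
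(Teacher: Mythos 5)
Your reduction of part (2) to part (1) has a genuine gap at the final transfer step $\E\|S_n\|^p \le C\,\E\|S_n'\|^p$. One-sided decoupling in this direction --- bounding the coupled martingale-difference sum by the decoupled tangent sum, with a constant independent of the space --- is not a theorem in arbitrary Banach spaces. The Kwapie\'n--Woyczy\'nski / de la Pe\~na inequalities you invoke give such comparisons for real- and Hilbert-valued variables (or under UMD-type hypotheses); in the Banach-valued setting this is precisely the ``decoupling property,'' a nontrivial restriction on the space that holds for UMD spaces and a few others but cannot be taken for granted in a merely $(2,\beta)$-smooth space without proof. Moreover, even where such an inequality is available, its constant cannot be ``absorbed into the leading factor'': the statement asserts the explicit constant $2^p\beta^p$, and that constant is what the rest of the paper uses (e.g.\ in Lemma~\ref{lem:mean_concentration} and Corollary~\ref{cor:empirical_mean}), so any extra universal factor weakens the result. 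There is also a soft spot in part (1): a good-$\lambda$/stopping-time derivation of the BDG-type bound does not produce the constant $2^p\beta^p$, and your preliminary $L^2$ bound is vacuous here because only $p$-th moments with $p\le 2$ are assumed finite. The clean route, which is the paper's, is symmetrization: insert an independent copy by Jensen, insert signs by symmetry, remove the copy by convexity at cost $2^p$, and finish with the conditional Rademacher (type-2) estimate in a $(2,\beta)$-smooth space at cost $\beta^p$; the second inequality then follows from subadditivity of $t\mapsto t^{p/2}$ exactly as you say.

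For part (2) the paper never passes through a decoupled-versus-coupled comparison. It runs a one-step induction on $n$: write $S_n = S_{n-1} + X_n - \E[X_n'\mid\calF_{n-1}]$ and apply Jensen to replace $X_n$ by $X_n - X_n'$; use the conditional symmetry of $X_n - X_n'$ given $\calF_{n-1}$ to insert a Rademacher sign $\epsilon_n$; then show $\E\big(\|S_{n-1}+\epsilon_n(X_n-X_n')\|^2\mid\calH_n\big) \le \|S_{n-1}\|^2 + \beta^2\|X_n-X_n'\|^2$ by the same Pinelis-style Taylor/smoothness computation as in Proposition~\ref{prop:banach} (the first derivative vanishes because $\epsilon_n$ is a fair sign; the second derivative is controlled via \eqref{eq:2beta-smooth}); finally apply Jensen with $p/2\le 1$, subadditivity of $t\mapsto t^{p/2}$, and convexity ($\E\|X_n-X_n'\|^p\le 2^p\E\|X_n\|^p$), and invoke the inductive hypothesis. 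This delivers exactly $2^p\beta^p\sum_{m}\E\|X_m\|^p$ with no decoupling property of the space. If you wished to salvage your conditional-on-$\calF_\infty$ reduction, you would first have to prove the one-sided decoupling inequality in $(2,\beta)$-smooth spaces with an explicit constant, and the natural proof of that inequality is essentially the induction just described --- so the reduction buys nothing.
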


\begin{proof}

Let us first assume that $(X_n)_{n \geq 1}$ are independent. In such a case, the result follows from combining symmetrization arguments and Jensen's inequality. More specifically, let $(\epsilon_n)_{n \geq 1}$ be a sequence of i.i.d.\ Rademacher random variables that are also independent of $(X_n)_{n \geq 1}$.\footnote{A random variable $\epsilon$ follows a Rademacher distribution if $\P(\epsilon = 1) = \P(\epsilon = - 1) = \frac{1}{2}$.} It suffices to observe that

\begin{align*}
\E\left\|\sum_{m = 1}^n X_m\right\|^p &= \E\left\|\sum_{m = 1}^n (X_n - \E X_n)\right\|^p \\
&\leq \E\left\| \sum_{m = 1}^n (X_m - X_m')\right\|^p &\text{Jensen's inequality} \\
&= \E\left\|\sum_{m = 1}^n \epsilon_m (X_m - X_m') \right\|^p &\text{Symmetry}\\
&\leq 2^p \E\left\|\sum_{m = 1}^n \epsilon_m X_m\right\|^p &\text{Convexity of } \|\cdot\|^p \\
&\leq 2^p \beta^p \E\left(\sum_{m = 1}^n \|X_m\|^2\right)^{p/2},
\end{align*}
to prove the first inequality.
The second inequality follows because $(a + b)^q \leq a^q + b^q$ for  $a, b \geq 0, q \in (0, 1]$.

If $(X_n)_{n \geq 1}$ are not independent, let $(X_n')_{n \geq 1}$ be a decoupled tangent sequence for $(X_n)_{n \geq 1}$ (per Definition~\ref{def:decouple}). 
Let $(\calG_n)_{n \geq 0}$ denote the natural filtration associated with $(X_n')_{n \geq 1}$ and let $(\calH_n)_{n \geq 0}$ denote the join of $\calG$ and $\calF$. Further, let $(\epsilon_n)_{n \geq 0}$ be a sequence of i.i.d.\ Rademacher random variables independent of $\calH_\infty$. First, by construction, we note that $X_n - X_n'$ is conditionally symmetric given $\calF_{n - 1}$. Thus, we have 
\[
\epsilon_n (X_n - X_n') =_{d} X_n - X_n' \mid \calF_{n - 1}.
\]

We prove the desired claim inductively. For $n = 1$, the result is trivially true. Next, observe that we have:
\begin{align*}
\E\|S_n\|^p &= \E\|S_{n - 1} + X_n\|^p \\
&= \E\|S_{n - 1} + X_n - \E(X_n' \mid \calF_{n - 1})\|^p &\text{(Definition of $X_n'$)} \\
&\leq \E\left[\E\left(\|S_{n - 1} + X_n - X_n'\|^p \mid \calF_{n - 1}\right)\right] &\text{(Jensen's inequality)} \\
&= \E\left[\E\left(\|S_{n - 1} + \epsilon_n(X_n - X_n')\|^p \mid \calF_{n - 1}\right)\right] &\text{(Conditional Symmetry)}\\
&=\E\left[\|S_{n - 1} + \epsilon_n(X_n - X_n')\|^p \right] &\text{(Tower Rule)}\\
&=\E\left[\E\left(\|S_{n - 1} + \epsilon_n(X_n - X_n')\|^p \mid \calH_n\right)\right] &\text{(Tower Rule)}\\
&\leq \E\left[\E\left(\|S_{n - 1} + \epsilon_n(X_n - X_n')\|^2\mid \calH_n\right)^{p/2}\right] &\text{(Jensen's inequality)}\\
&=: (\star).
\end{align*}
Next, define the function $g(\theta) := \E\left(\|S_{n - 1} + \theta \epsilon_n (X_n - X_n')\|^2 \mid \calH_n\right)$. In principle, the norm function need not be differentiable, and the same applies to $g$. Nonetheless, and analogously to the proof of Proposition~\ref{prop:banach}, one can invoke \citet[Remark 2.4]{pinelis1994optimum} to assume smoothness of the norm without loss of generality, alongside the properties exhibited in \citet[Lemma 2.2]{pinelis1992approach}. It follows that 
\begin{align*}
g'(0) &= \E\left(\frac{d}{d\theta}\|S_{n - 1} + \theta\epsilon_n(X_n - X_n')\|^2 \mid \calH_n\right) \\
&= 2\E\left(\|S_{n - 1}\|\langle D_v \|v\|\vert_{v = S_{n - 1}}, \epsilon_n(X_n - X_n')\rangle \mid \calH_n\right) \\
&= 2\|S_{n - 1}\|\left\langle D_v\|v\|_{v = S_{ n -1}}, \E[\epsilon_n(X_n - X_n') \mid \calH_n]\right\rangle \\
&= 0.
\end{align*}
Likewise, from the second property exhibited in~\citet[Equation 2.4, Lemma 2.2]{pinelis1992approach}, one has $$g''(\theta) \leq 2\beta^2\E(\|\epsilon_n(X_n - X_n')\|^2 \mid \calH_n).$$
Thus, putting these ideas together, we have
\begin{align*}
\E(\|S_{n - 1} + \epsilon_n(X_n - X_n')\|^2 \mid \calH_n) &= g(1) = g(0) + g'(0) + \int_{\theta = 0}^1(1 - \theta)g''(\theta)d\theta \\
&\leq \E\left(\|S_{n - 1}\|^2 \mid \calH_n\right) + \beta^2\E\left(\|\epsilon_n(X_n - X_n')\|^2 \mid \calH_n\right)\\
&= \|S_{n - 1}\|^2 + \beta^2 \|X_n - X_n'\|^2.
\end{align*}
Plugging this into $(\star)$ above, we obtain
\begin{align*}
(\star) &=  \E\left[\E\left(\|S_{n - 1} + \epsilon_n(X_n - X_n')\|^2\mid \calH_n\right)^{p/2}\right] \\
&\leq \E\left(\|S_{n - 1}\|^2 + \beta^2 \|X_n - X_n'\|^2\right)^{p/2} \\
&\leq \E\|S_{n - 1}\|^{p} + \beta^p\E\|X_n - X_n'\|^{p} &(a + b)^q \leq a^q + b^q\; \forall a, b \geq 0, q \in (0, 1] \\
&\leq \E\|S_{n - 1}\|^p + 2^p \beta^p\E\|X_n\|^p &\text{(Convexity of } \|\cdot\|^p\text{)} \\
&\leq 2^p \beta^p\sum_{m = 1}^n\E\|X_m\|^p &\text{(Inductive hypothesis)},
\end{align*}
concluding the proof.
\end{proof}

\begin{proof}[\bf Proof of Lemma~\ref{lem:mean_concentration}]
Proposition~\ref{prop:marcin} provides a bound on the $p$-th moment of sums in smooth Banach spaces. The proof of Lemma~\ref{lem:mean_concentration} now follows as a corollary. Applying Markov's/Chebyshev's inequality yields that, for any $t > 0$, we have
\[
\P(\|\wh{\mu}_n - \mu \| \geq t) \leq \frac{\E\|\wh{\mu}_n - \mu\|^p}{t^p} = \frac{\E\left\|\sum_{m = 1}^n (X_m - \mu)\right\|^p}{n^p t^p}.
\]
By Proposition~\ref{prop:marcin}, we see that
\begin{align*}
\E\left\|\sum_{m = 1}^n (X_m - \mu)\right\|^p &\leq 2^p \beta^p \sum_{m = 1}^n \E\|X_m\|^p  \leq n 2^p \beta^p v,
\end{align*}
where the final inequality follows by Assumption~\ref{ass:mean}. 
Thus, we have
\[
\P(\|\wh{\mu}_n - \mu \| \geq t) \leq \frac{n^{1- p}v 2^p\beta^p }{t^p}.
\]
To meet the target failure probability $\delta$, we simply take $t = \delta^{-1/p}v^{1/p} 2\beta n^{-(p - 1)/p}$, which proves the desired result.  
\end{proof}

\begin{proof}[\bf Proof of Corollary~\ref{cor:empirical_mean}]
Let $\zhat_k$ be the empirical mean on the first $k-1$ observations. By Lemma~\ref{lem:mean_concentration}, 
we can take $r^p(\delta,k) = \frac{v2^p\beta^p}{\delta (k-1)^{p-1}}$.
Taking $\lambda$ as in Corollary~\ref{cor:opt-lambda} gives that with probability $1-\delta$, for all $n>k$, 
\begin{align*}
  \|\wh{\mu}_n(k) - \mu\| &\leq 2\Big((\beta\const_p(\B) + K_p 2^{p - 1})(v + r^p( \delta_2,k))\Big)^{1/p}\left(\frac{\beta\log(2/\delta_1)}{n - k}\right)^{(p - 1)/p} \\ 
  &= 2\beta C_p^{1/p}(v + r^p(\delta_2,k))^{1/p}\left(\frac{\log(2/\delta_1)}{n - k}\right)^{(p - 1)/p} \\ 
  &= 2\beta C_p^{1/p}\left(v + \frac{v2^p\beta^p}{\delta k^{p-1}} \right)^{1/p}\left(\frac{\log(2/\delta_1)}{n - k}\right)^{(p - 1)/p} \\ 
  &= 2\beta(vC_p)^{1/p}\left( 1 + O\left(\frac{\beta^p}{\delta_2k^{p-1}}\right)\right)^{1/p} \left(\frac{\log(2/\delta_1)}{n - k}\right)^{(p - 1)/p}
\end{align*}
where $C_p = \const_p(\B) + \beta^{-1}K_p2^{p-1}$. When $\widehat{\mu}$ the geometric median-of-means estimator, Appendix~\ref{sec:gmom-banach} shows that we may take 
\begin{equation*}
    r^p(\delta_2,k) = \frac{11^p\beta^p}{0.1} v \left(\frac{3.5 \log(1/\delta)+1}{k}\right)^{p-1}. 
\end{equation*}
Again, taking $\lambda$ as in Corollary~\ref{cor:opt-lambda} gives that with probability $1-\delta$, for all $n> k$, 
\begin{align*}
    \|\wh{\mu}_n(k) - \mu\| &\leq 2\Big((\beta\const_p(\B) + K_p 2^{p - 1})(v + r^p( \delta_2,k))\Big)^{1/p}\left(\frac{\beta\log(2/\delta_1)}{n - k}\right)^{(p - 1)/p} \\ 
    &\leq 2\beta(vC_p)^{1/p}\left( 1 + O\left(\frac{\beta^p \log(1/\delta)^{p-1}}{k^{p-1}}\right)\right)^{1/p} \left(\frac{\log(2/\delta_1)}{n - k}\right)^{(p - 1)/p},
\end{align*}
as desired. 
\end{proof}

\section{Geometric Median-of-Means in Banach Spaces}
\label{sec:gmom-banach}
While \citet{minsker2015geometric} studied mean estimation in smooth Banach spaces, his examples weren't stated explicitly for Banach spaces nor for the case of infinite variance.  
Here we show that his geometric median-of-means estimator, when paired with empirical mean, achieves rate $O(v^{1/p}(\log(1/\delta)/n)^\frac{p-1}{p})$, the same as our rate. As usual, we assume we are working in a $\beta$-smooth Banach space. 

\citet[Theorem 3.1]{minsker2015geometric} provides the following bound.
Let $\widehat{\mu}_1,\dots,\widehat{\mu}_B$ be a collection of independent estimators of the mean $\mu$. Fix $\alpha\in(0,1/2)$. Let $0<\gamma<\alpha$ and $\epsilon>0$ be such that for all $b$, $1\leq b\leq B$, we have 
\begin{equation}
    \P(\| \mu - \widehat{\mu}_b\| >\epsilon)\leq \gamma.
\end{equation}
Let $\widehat{\mu} = \text{median}(\widehat{\mu}_1,\dots,\widehat{\mu}_B)$ be the geometric median, defined as
\[
\widehat{\mu} := \arg\min_{\mu \in \B} \sum_{b=1}^B \|\widehat{\mu}_b - \mu\|.
\]
Then 
\begin{equation}
    \P(\| \mu - \widehat{\mu}\| > C_\alpha \epsilon)\leq \exp(-B \psi(\alpha;\gamma)),
\end{equation}
where 
\[\psi(\alpha;\gamma) = (1-\alpha)\log\frac{1-\alpha}{1-\gamma} + \alpha\log\frac{\alpha}{\gamma},\]
and 
\[C_\alpha = \frac{2(1-\alpha)}{1-2\alpha}.\]
We will optimize Minsker's bound by taking the same optimization parameters as in his paper. That is, we will set $\alpha_\ast := \frac{7}{18}$ and $\gamma_\ast := 0.1$ and will set the number of naive mean estimators $B$ to be given by 
\[
B := \left\lfloor \frac{\log(1/\delta)}{\psi(\alpha_\ast; \gamma_\ast)}\right\rfloor + 1 \leq 3.5\log(1/\delta) + 1,
\]
which provides an overall failure probability of at most 
$\exp(-B\psi(\alpha_\ast; \gamma_\ast)) \leq \delta.$ 
Lemma~\ref{lem:mean_concentration} gives 
\[
\P(\|\wh{\mu}_b - \mu\| \geq \epsilon) \leq \frac{v 2^p\beta^p}{\epsilon^p (n/B)^{p-1}} \leq \gamma_\ast,
\]
if we take  
\[\epsilon = \left( \frac{B^{p-1} v 2^p\beta^p}{\gamma_* n^{p-1}}\right)^{1/p}.\]
Therefore, we obtain that with probability $1-\delta$, 
\begin{align*}
    \|\widehat{\mu} - \mu\| &\leq C_{\alpha_\ast} \epsilon = \frac{4\beta(1-\alpha_\ast)}{1 - 2\alpha_\ast} \left(\frac{B^{p-1} }{\gamma_\ast n^{p-1}}\right)^{1/p} v^{1/p} 
    \leq \frac{11\beta}{0.1^{1/p}} v^{1/p} \left(\frac{3.5 \log(1/\delta)+1}{n}\right)^{1-1/p}.
\end{align*}
\section{Noncentral moment bounds}
\label{sec:additional-results}
For completeness, we state our bound when we assume only a bound on the raw (uncentered) $p$-th moment of the observations. This was the setting studied by \citet{catoni2018dimension}. We replace assumption~\ref{ass:mean} with the following:

\begin{ass}
\label{ass:raw_moment}
We assume $(X_n)_{n \geq 1}$ are a sequence of $\B$-valued random variables adapted to a filtration $(\calF_n)_{n\geq 0}$ such that 
\begin{enumerate}
    \item[(1)] $\E(X_n \mid \calF_{n - 1}) = \mu$, for all $n \geq 1$ and some unknown $\mu \in \B$, and 
    \item[(2)] $\sup_{n \geq 1}\E\left(\|X_n \|^p \mid \calF_{n - 1}\right) \leq v < \infty$ for some known $p \in (1, 2]$ and some known constant $v > 0$.
\end{enumerate}
\end{ass}

With only the raw moment assumption, we do not try and center our estimator. Instead we deploy $\est_n(0, \lambda,0) = \frac{1}{n}\sum_{m\leq n} \Trunc(\lambda X_m)X_m$. With this estimator we obtain the following result, which achieves the same rate as \citet{catoni2018dimension} and \citet{chugg2023time}.

\begin{theorem}
 Let $X_1,X_2,\dots$ be random variables satisfying Assumption~\ref{ass:raw_moment} which live in some Banach space $(\B,\|\cdot\|)$ satisfying Assumption~\ref{ass:smooth}. 
Fix any $\delta \in (0, 1]$. Then, for any $\lambda > 0$, with probability $1-\delta$, simultaneously for all $n\geq 1$, we have: 
\begin{equation}
\left\| \est_{n}(0,\lambda,0) - \mu\right\| \leq 2v\lambda^{p-1}(\beta\const_p(\B) + K_p2^{p-1}) + \frac{\beta\log(2/\delta)}{\lambda n}. 
\end{equation}
Moreover, if we want to optimize the bound at a particular sample size $n$ and we set $\rho = \beta^{-1}$
\[\lambda = \left(\frac{\log(2/\delta)\beta}{2nv (\beta\const_p(\B) + K_p2^{p-1})}\right)^{1/p},\]
then with probability $1-\delta$, 
\begin{equation}
\label{eq:bound-uncentered}
\left\| \est_{n}(0,\lambda,0) - \mu\right\| \leq 2(2v (\beta\const_p(B) + K_p2^{p-1}))^{1/p}\left(\beta\frac{\log(2/\delta)}{n}\right)^{(p - 1)/p}.    
\end{equation}
\end{theorem}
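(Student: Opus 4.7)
The bound is a direct specialization of the three-step argument of Section~\ref{sec:analysis} to the case $\zhat \equiv 0$ under the raw-moment hypothesis. My plan is to retrace Lemma~\ref{lem:trunc_error}, Proposition~\ref{prop:banach}, and Lemma~\ref{lem:sub_p} with $z = 0$ throughout, and to control the resulting $\|\mu\|^p$ terms by $v$ using Jensen's inequality applied to the raw moment: $\|\mu\|^p = \|\E[X_n \mid \calF_{n-1}]\|^p \leq \E[\|X_n\|^p \mid \calF_{n-1}] \leq v$. Every other element of the existing argument carries over unchanged, and in particular there is no need for the decomposition $\delta = \delta_1 + \delta_2$ since $\zhat = 0$ is deterministic.

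Concretely, setting $z = 0$ in Lemma~\ref{lem:trunc_error} yields $\|\mu - \wt{\mu}_n(\lambda)\| \leq K_p 2^{p-1}\lambda^{p-1}(v + \|\mu\|^p) \leq 2 K_p 2^{p-1}\lambda^{p-1} v$ after Jensen. Setting $\zhat_n = 0$ in Proposition~\ref{prop:banach} similarly converts the bound~\eqref{eq:Sn2_bound} into $\E_{n-1}\|\Delta S_n\|^2 \leq C\lambda_n^p \cdot 2^{p-1}(v + \|\mu\|^p) \leq C\lambda_n^p \cdot 2^p v$, so that the same Pinelis-style computation produces a nonnegative supermartingale dominating $\tfrac{1}{2}\exp\{\rho\|S_n\| - 2\rho^2 \beta^2 \const_p(\B)\cdot nv\lambda^p\}$. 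Combining these two estimates via the triangle inequality exactly as in Lemma~\ref{lem:sub_p} yields a nonnegative supermartingale upper bound (starting at $1$) for
\[
M_n := \tfrac{1}{2}\exp\!\Big\{\rho\,\|\estimator_n - \mu n\lambda\| \,-\, 2\bigl(\rho^2\beta^2\const_p(\B) + \rho K_p 2^{p-1}\bigr)\cdot nv\lambda^p\Big\}.
\]
Ville's inequality, followed by a division by $\rho n \lambda$ and the choice $\rho = \beta^{-1}$, then recovers the first claimed display. The optimized bound~\eqref{eq:bound-uncentered} follows by substituting the stated $\lambda$---the value that equalizes the two terms $A\lambda^{p-1}$ and $B/\lambda$ on the right-hand side---and simplifying.

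The only real subtlety is bookkeeping: the clean prefactor $2v$ in the stated bound arises from applying Jensen \emph{inside} the lemmata via $v + \|\mu\|^p \leq 2v$. A lazier strategy---first converting the raw moment bound to a central moment bound $\E\|X_n - \mu\|^p \leq 2^p v$ and then invoking Theorem~\ref{thm:template} as a black box with $\zhat_0 = 0$---would rescale the raw moment twice and yield a strictly looser constant (e.g.\ $(2^p+1)v$ rather than $2v$). Once this bookkeeping is handled, the proof is otherwise a transparent specialization of the existing machinery, and no new probabilistic inputs beyond Ville's inequality are required.
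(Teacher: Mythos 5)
Your proposal is correct, and the probabilistic content is identical to the paper's (the Lemma~\ref{lem:trunc_error}/Proposition~\ref{prop:banach}/Lemma~\ref{lem:sub_p}/Ville pipeline with $\rho=\beta^{-1}$, followed by the $\lambda$ that equalizes the two terms). The difference is in how the specialization is justified. The paper's own proof is a two-line black-box invocation: apply Theorem~\ref{thm:template} with $k=0$, $\zhat_k=0$, and take $r(\delta,0)=v^{1/p}$ since $\|\mu\|\le(\E\|X\|^p)^{1/p}\le v^{1/p}$ by Jensen. You instead re-run the three lemmata with $z=0$ under the raw-moment hypothesis, and this is arguably the more careful route: read literally, Theorem~\ref{thm:template} assumes the \emph{central} moment bound of Assumption~\ref{ass:mean}, which Assumption~\ref{ass:raw_moment} does not supply with the same $v$ (central and raw $p$th moments are not comparable without a constant). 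Your key observation---that when $\zhat\equiv 0$ the only places the moment assumption enters are $\E_{n-1}\|X_n-\zhat_n\|^p$ in Lemma~\ref{lem:trunc_error} and in \eqref{eq:Sn2_bound}, where the raw bound $\E_{n-1}\|X_n\|^p\le v$ suffices and is even dominated by $2^{p-1}(v+\|\mu\|^p)\le 2^p v$---is exactly what legitimizes the paper's clean $2v$ prefactor, and your remark that a literal raw-to-central conversion before black-boxing the template would inflate the constant to roughly $(2^p+1)v$ is accurate. Your handling of the confidence split is also fine: since $\|\mu\|\le v^{1/p}$ holds deterministically, the event $B_2$ is vacuous and all of $\delta$ can be spent on Ville's inequality, recovering $\log(2/\delta)$ (equivalently, one may let $\delta_2\downarrow 0$ in the template). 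Two cosmetic points: in your displayed supermartingale the factor should be $\const_p(\B,\rho)$ for general $\rho$, collapsing to $\const_p(\B)$ only after you fix $\rho=\beta^{-1}$; and your Step-1 bound could be tightened to $K_p\lambda^{p-1}v$ with $z=0$, though keeping the looser $2^{p-1}(v+\|\mu\|^p)$ form is what reproduces the theorem's stated constants.
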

\begin{proof}
Apply Theorem~\ref{thm:template} with $k=0$ and $\zhat_k=0$. Then note that we can take $r(\delta, 0) = v^{1/p}$ for all $\delta$ since $\| \mu\| \leq (\E \|X\|^p)^{1/p}\leq v^{1/p}$ by Jensen's inequality. 
\end{proof}

\end{document}